\newtheorem{theorem}{Theorem}[section]
\newtheorem{corollary}[theorem]{Corollary}
\newtheorem{lemma}[theorem]{Lemma}
\newtheorem{proposition}[theorem]{Proposition}
\theoremstyle{definition}
\newtheorem{definition}[theorem]{Definition}
\newtheorem{remark}[theorem]{Remark}
\newcommand{\ff}{\textbf{f}}
\newcommand{\hh}{\textbf{h}}
\renewcommand{\aa}{\textbf{a}}
\newcommand{\xx}{\textbf{x}}
\newcommand{\oo}{\textbf{0}}
\newcommand{\yy}{\textbf{y}}
\newcommand{\bb}{\textbf{b}}
\newcommand{\cc}{\textbf{c}}
\newcommand{\bt}{\begin{theorem}}
\newcommand{\et}{\end{theorem}}
\newcommand{\bc}{\begin{corollary}}
\newcommand{\ec}{\end{corollary}}
\newcommand{\bl}{\begin{lemma}}
\newcommand{\el}{\end{lemma}}
\newcommand{\br}{\begin{remark}}
\newcommand{\er}{\end{remark}}
\newcommand{\bd}{\begin{definition}}
\newcommand{\ed}{\end{definition}}
\newcommand{\be}{\begin{equation}}
\newcommand{\ee}{\end{equation}}
\newcommand{\ba}{\begin{array}}
\newcommand{\ea}{\end{array}}
\newcommand{\ra}{\rightarrow}
\newcommand{\R}{\mathbb{R}}
\newcommand{\RN}{\mathbb{R}^N}
\newcommand{\Z}{\mathbb{Z}}
\newcommand{\N}{\mathbb{N}}
\newcommand{\la}{\lambda}
\newcommand{\Id}{\mathrm{Id\,}}
\newcommand{\Ker}{\mathrm{Ker\, }}
\newcommand{\Coker}{\mathrm{Coker\,}}
\newcommand{\ind}{\operatorname{ind}}
\newcommand{\hof}{\hbox{\,of\, }}
\newcommand{\Ind}{\mathrm{Ind\, }}
\newcommand{\rk}{\operatorname{rk}}
\newcommand{\sk}{\medskip}
\newcommand{\hfor}{\hbox{ \,for\, }}
\newcommand{\hif}{\hbox{ \,if\, }}
\newcommand{\hand}{\hbox{ \,and\, }}
\newcommand{\im}{\operatorname{Im}}
\newcommand{\sign}{\operatorname{sign}}
\title{Topology and homoclinic trajectories of discrete dynamical
systems}
\author[Jacobo Pejsachowicz and Robert Skiba]{}
\subjclass{Primary: 39A28; 34C23, 58E07; Secondary: 37G20 47A53.}
 \keywords{Discrete dynamical systems, Homoclinics, Bifurcation, Index bundle, Fredholm maps}
 \email{jacobo.pejsachowicz@polito.it}
 \email{robo@mat.umk.pl}
\dedicatory{Dedicated to Petr P. Zabrejko}
\thanks{The first author is supported by  MIUR-PRIN 2009-Metodi variazionali e topologici nei fenomeni nonlineari. The second author is supported in part by Polish scientific grant N N201 395137}
\begin{document}

\begin{abstract}
We show that nontrivial homoclinic trajectories of
a family of discrete, nonautonomous,   asymptotically hyperbolic systems parametrized by a circle  bifurcate from a stationary solution if  the asymptotic stable bundles  $E^s(+\infty)$ and $E^s(-\infty)$ of the linearization at the stationary branch   are  twisted in different ways.
\end{abstract}

\maketitle

% Enter the first author's name and address:
\centerline{\scshape Jacobo Pejsachowicz}
\medskip
{\footnotesize
% please put the address of the first author
 \centerline{Dipartamento di Matematica}
   \centerline{Politecnico di Torino}
   \centerline{Corso Duca Degli Abruzzi 24}
   \centerline{10129 Torino, Italy}
} % Do not forget to end the {\footnotesize by the sign }

\medskip

\centerline{\scshape Robert Skiba}
\medskip
{\footnotesize
 % please put the address of the second  and third author
 \centerline{Faculty of Mathematics and Computer Science }
   \centerline{Nicolaus Copernicus University}
   \centerline{Chopina 12/18, 87-100 Toru\'n, Poland}
}

\bigskip

% The name of the associate editor will be entered by an editorial staff
% "Communicated by the associate editor name" is not needed for special issue.
 %\centerline{(Communicated by the associate editor name)}

\section{Introduction}

In this paper we will investigate  the birth of homoclinic
trajectories of  discrete nonautonomous dynamical systems  from
the  point of view of topological  bifurcation theory. This means
that instead of proving the existence of a homoclinic trajectory
for a single  dynamical system we will consider a one parameter
family  of discrete nonautonomous dynamical  systems  on $\RN$ having  $x\equiv 0$ as a
stationary trajectory and   show that, under appropriate conditions,
the dynamical systems with parameter values  close to a given point, must have  trajectories  homoclinic  to $0.$ Values of the parameter for which this occurs are called bifurcation points.

Bifurcation theory for various types of bounded solutions of
discrete  nonauton\-omous dynamical systems have been studied in
\cite{Ham,Ras} and more  recently in \cite{ Potz, Potz-1}. However
our approach is  different.  We will not look for homoclinics
bifurcating at a value of the parameter given apriori but instead we will
discuss the appearance of homoclinic solutions forced  by the
asymptotic behavior of the family of linearized equations at $0.$

When the family is asymptotically hyperbolic, the  asymptotic stable and  unstable subspaces  of  the linearized equations form vector bundles over the parameter space   which might  be nontrivial  when  the parameter space carries some nontrivial topology. We  will show that homoclinic trajectories  bifurcate from  a stationary solution  if  the asymptotic stable bundles  $E^s(+\infty)$ and $E^s(-\infty)$ of the linearization along the stationary branch   are  "twisted" in different ways.

Our results  require methods going beyond   the
classical Lyapunov-Schmidt reduction and spectral analysis at a
potential bifurcation point. While similar results can be proved  for general parameter spaces using  more sophisticated technology  from algebraic topology, here we will concentrate  to on
  the simplest topologically nontrivial parameter space, the circle. This is equivalent to considering families  of dynamical systems parametrized by an interval $[a,b]$ with the assumption that the systems  at $a$ and $b$ are the same.

Roughly speaking, we will first translate the problem of bifurcation of homoclinic trajectories
into a problem
of bifurcation from a trivial branch of zeroes for  a parametrized family of $C^1$-Fredholm maps.
Then  we will  consider the  {\em index bundle} of  the family of linearizations at points of the
trivial branch given by the stationary solutions of the equation. The index bundle of a family
of Fredholm operators is a refinement of the ordinary index of a Fredholm operator which takes into
account the topology of the parameter space.  A  special "homotopy variance" property  of the topological
degree for  $C^1$-Fredholm maps,  constructed in  \cite{Pej-Rab}  relates the nonorientability  of the index
bundle to bifurcation of zeroes. On the other hand, an elementary  index theorem, Theorem \ref{prop:ind}, allows
us to compute the index bundle in terms of  the asymptotic stable bundles  of the linearized problem, relating
in this way the appearance of homoclinics to the asymptotic behavior of coefficients of the linearized equations.
The precise result is stated in Theorem \ref{theorem} of Section $2$. An analogous approach applied to nonautonomous differential equations can be found in \cite{Pejs-2,Pejs-3}.

The paper is organized as follows. In the next section we
introduce  the problem and state our main result. In Section $3$
we recall  the concept of the index bundle and discuss  its
orientability.  In the fourth section, we
 compute the index
bundle of the family of operators associated to a family of
asymptotically hyperbolic nonautonomous dynamical systems.   In
Section $5$, we discuss  the {\it parity} of a path of Fredholm
operators of index $0,$ and we recall the construction in
\cite{Pej-Rab} of a topological degree theory for $C^1$-Fredholm
maps of index $0$ extending  to proper Fredholm maps the well
known Leray-Schauder degree. In Section $6,$  using  the
computation of the index bundle and the homotopy property  of the
topological degree constructed in \cite{Pej-Rab} we prove Theorem
\ref{theorem}. In  the seventh section, we illustrate
 Theorem \ref{theorem}
with  a non-trivial example.  Section $8$ is
devoted to comments and possible extensions of our results. 
%The appendix collects the proofs of various properties of the index bundle used in the article.

\section{ The main result}

A nonautonomous discrete  dynamical system on $\R^N$
is defined by a doubly infinite
 sequence of  maps $\ff=\{f_n\colon \R^N\ra\R^N \mid
n\in \Z\}$. A trajectory of the system $\ff\colon \Z\times
\R^N\ra\R^N$ is a sequence $\xx=(x_n)$ such that
\begin{equation}\label{1} x_{n+1}=f_n(x_n).
\end{equation}

In  the
 terminology of \cite{Potz} \eqref{1} is a nonautonomous
difference equation whose solutions are trajectories of the
corresponding  dynamical system.

In what follows we will  always assume  that  the
$f_n$ are $C^1$ and that  $f_n(0) =0.$ Under this assumption the system has a stationary trajectory  $\xx=\oo, $ where $\oo$     is a sequence of zeroes. A  trajectory  $\xx=(x_n)$ of  $\ff$ is called {\it homoclinic} to $\oo$, or
simply a homoclinic trajectory, if   $\lim\limits_{n\ra \pm\infty}x_n=0.$ Under our assumptions the system $\ff$  has always a trivial  homoclinic trajectory. Namely, the stationary trajectory  $\oo.$  We will look for nontrivial homoclinic trajectories.

A   natural function space for the study of homoclinic trajectories is the
Banach space
\begin{equation*}
{\bf c}(\R^N):=\{\xx\colon \Z\ra \R^N \mid \lim_{|n|\ra \infty }
x_n =0\}
\end{equation*}
equipped with the norm $\|\xx\|\!\!:=\sup_{k\in \Z}|x_n|$.
Any   homoclinic trajectory of $\ff$  is naturally an element of  this space.
 Moreover, each  dynamical  system $\ff$  induces  a nonlinear Nemytskii (substitution) operator
\be\label{2} F\colon \cc(\RN)\ra\cc(\RN)\ee
 defined by $F(\xx)=(f_n(x_n)).$ Under some natural  assumptions (see below)  $F$ becomes
$C^1$-map
 such that $F(\oo)= \oo.$ In this way  nontrivial homoclinic trajectories become
 the nontrivial solutions of the equation $S\xx -F(\xx) =\oo,$ where $$ S\colon \cc(\RN)\ra\cc(\RN)$$
 is the shift operator $S(\xx) = (x_{n+1}).$

The linearization of the system $\ff$ at the stationary solution
$\oo$ is the nonautonomous linear dynamical system
$\aa\colon \Z\times \R^N\ra\R^N$ defined by the sequence of
matrices $(a_{n})\in \R^{N\times N},$ with $a_n = Df_n(0)$. The
corresponding linear difference equation is
\begin{equation}\label{1'}x_{n+1} = a_{n}x_n.
\end{equation}

If  $f_n=f,$ for all $n\in \Z,$
the system is called autonomous.
We will deal only with discrete nonautonomous dynamical systems
whose linearization at $\oo$ is asymptotic  for $n \ra \pm \infty$
to an autonomous  linear dynamical system associated to a
hyperbolic matrix.  We will call systems with this property {\it
asymptotically hyperbolic}.

Let us recall that an invertible matrix   $a$ is
called {\it hyperbolic} if $a$ has no eigenvalues of norm one,
i.e., $\sigma(a)\cap \{|z|=1\}=\emptyset.$ The spectrum
$\sigma(a)$ of an hyperbolic matrix $a$ consists of  two
disjoint closed subsets $\sigma(a)\cap\{|z| < 1\}$ and
$\sigma(a)\cap\{|z|> 1\}$, so $\R^N$ has the $a$-invariant
spectral decomposition $\R^N=E^s(a) \oplus E^u(a)$, where $E^s(a)
$ (respectively $E^u(a)$)  is the real part of   sum of the generalized
eigenspaces corresponding to the part of the spectrum of $a$ inside
the unit disk (respectively outside of the unit disk). It is easy
to see that $ \zeta \in E^s(a)$ if and only if
$\lim\limits_{n\ra\infty}a^n\zeta=0$.
The unstable subspace
$E^u(a) $  has a similar characterization, i.e., $\zeta\in E^u(a)$
if and only if $\lim\limits_{n\ra\infty}a^{-n}\zeta=0.$

When the linearized system is asymptotically  hyperbolic, the map
$G=S-F$  becomes a Fredholm map  (at least in a neighborhood of $\oo$) which will allow us to apply the results of
general bifurcation theory for Fredholm maps to our problem by relating the corresponding bifurcation invariants
to the asymptotic behavior of the linearization at  $\pm \infty.$

Let us describe   precisely our setting and assumptions.

A  {\it continuous  family of  $C^1$-dynamical systems parametrized by  the unit circle $S^1$} is  a sequence of  maps
\begin{equation}\label{cfd}
\ff=\{f_n\colon S^1\times \R^N \ra \R^N \mid n\in\Z\}
\end{equation}
such that $f_n$ is  differentiable with respect to the second
variable and,  for all $n\in\Z, \ 0\leq j\leq 1,$
the map $(\la,x)\mapsto \displaystyle \frac{\partial^jf_n}{\partial x^j}(\la,x)$ is continuous.

To put it shortly, a continuous family of  $C^1$-dynamical systems  is a continuous map
$$\ff\colon  \Z\times  S^1\times \R^N\ra \R^N,$$
differentiable in the third variable and such that all the
partials  depend continuously on $(\la,x).$ We will use  $\ff_\la$
to denote the dynamical system corresponding to the  parameter
value $\la.$

\begin{remark} {\rm Alternatively one can think of $\ff$ as a double infinite sequence of maps $f_n \colon [a,b]\times \R^N \ra \R^N,$  such that $f_n(a,x)=f_n(b,x)$ for all $n\in \Z.$ } \end{remark}

 Pairs $(\la,\xx)$  which solve the
parameter-dependent difference equation:
\begin{equation}\label{main-system}
x_{n+1}=f_n(\la,x_n),\;\text{ for all }n\in\Z,
\end{equation}
will be  called  {\it homoclinic solutions}. Equivalently,  $(\la, \xx)$  is a homoclinic solution of \eqref{main-system} if  $\xx=(x_n)$ is a homoclinic trajectory of the dynamical system $\ff_\la.$

Homoclinic solutions of
(\ref{main-system}) of the form $(\la,{\bf 0})$ are called
trivial and the set $S^1\times\{\bf 0\}$ is called the {\it trivial or stationary branch.}
We are interested in nontrivial homoclinic solutions.

We will assume that the family
 $\ff\colon  \Z\times  S^1\times \R^N\ra \R^N$ of dynamical systems satisfies the following conditions:
\begin{enumerate}

\item[(A0)] For all $\lambda\in S^1$ and $n\in \mathbb{Z}$,
$f_{n}(\la,0)=0$.

\item[(A1)] For any $M>0$ and $\varepsilon>0$ there exists a $\delta>0 $
such that
for  all $(\la,x), (\mu,y) \in S^1\times \bar{B}(0,M)$
(\footnote{Given a normed space $\mathbb{E}$, $\bar{B}(x,r)$ and
$B(x,r)$, where $x\in \mathbb{E}$ and $r>0$, denote the closed and
open ball around $x$ of radius $r$ in $\mathbb{E}$,
respectively.}) with $ d\big((\la,x),(\mu,y)\big)<\delta$ and all
$j, \,0\leq j \leq 1,$

\begin{equation*}
\sup_{n\in \Z} \Bigg\|\frac{
\partial^j f_n}  {\partial x^j}(\la ,x) - \frac{\partial^j f_n} {\partial x^j}  (\mu ,y)\Bigg\|<\varepsilon.\end{equation*}

Here $d$ is the product distance in the metric space $S^1\times \R^N.$

\item[(A2)] For all bounded $\Omega\subset S^1\times \R^N$ one has
\begin{equation*}
\sup_{(n,\la,x)\in \Z\times \Omega} \Bigg\|\frac{\partial f_n}
{\partial x } (\la ,x) \Bigg\|<\infty.
\end{equation*}
\item[(A3)]  Let
$a_n(\la):=\displaystyle\frac{\partial f_n}{\partial x} (\la,0).$  As  $n\ra
\pm\infty$ the family of matrices  $a_n(\lambda)$ converges uniformly to a family of hyperbolic matrices
 $a(\lambda,\pm\infty).$ Moreover, for some,  and hence for all $\la \in S^1,$ $a(\la, +\infty)$ and $a(\la,-\infty)$
 have the same number of eigenvalues (counting  algebraic multiplicities) inside of the unit disk.
\item[(A4)]
There exists $\lambda_0\in S^1$ such that
\begin{equation}\label{3}
x_{n+1} = a_{n}(\la_0) x_n
\end{equation}
admits only the trivial solution $(x_n\equiv
0)_{n\in \mathbb{Z}} .$\end{enumerate}

By $(A3)$ the map $\la \ra
a(\la,\pm\infty)$ is a continuous family of hyperbolic matrices.
Since there are no eigenvalues  of
 $a(\la,\pm\infty)$ on the unit circle, the projectors to the spectral subspaces corresponding to the spectrum inside and  outside  the unit disk depend continuously on the parameter $\la$ (see \cite{Kat}). It is well known that  the  images of a continuous family of  projectors   form   a vector bundle over the parameter space \cite{La}.   Therefore,  the  vector spaces $E^s(\la,\pm\infty)$ and $E^u(\la,\pm\infty)$ whose elements are the generalized real
eigenvectors of $a(\la,\pm\infty)$ corresponding to the
eigenvalues with absolute value smaller (respectively greater)
than $1$ are fibers of a pair of vector bundles $E^s(\pm\infty)$
and $E^u(\pm\infty)$ over $S^1$ which decompose the trivial bundle
$\Theta(\R^N)$ with fiber $\R^N$ into a direct sum:
\begin{equation}\label{dirsum}
E^s (\pm \infty)\oplus  E^u(\pm \infty) = \Theta(\R^N).
\end{equation}

In what follows  $E^s (\pm \infty)$  and $E^u(\pm \infty)$  will
be called  \textit{stable and unstable}  asymptotic bundles at
$\pm\infty.$

Our main theorem relates the appearance of homoclinic solutions to
the topology of the asymptotic stable bundles $E^s(\pm \infty)$.
Due  to
 relation \eqref{dirsum} the consideration of the
unstable bundles would give the same result.

In what follows, for notational reasons,  it will be convenient for
us to work  with the multiplicative group $\Z_2=\{1,-1\}$ instead of the standard additive $\Z_2=\{0,1\}.$

A vector bundle over $S^1$ is orientable if and only if it is trivial, i.e., isomorphic to a product $S^1\times\R^k.$
 Moreover, whether a given vector bundle
 $E$ over $S^1$ is trivial or is not is determined by a topological invariant $w_1(E)\in\Z_2.$

In order to define $w_1(E)$  let us identify $S^1$ with the quotient of
an interval $ I= [a,b]$ by its boundary $\partial I = \{a,b\}.$ If
$p\colon [a,b]\ra S^1= I/\partial I$  is the projection,  the
pullback bundle $p^*E =E'$ is the vector  bundle over $I$ with
fibers $ E'_t =E_{p(t)}.$  Since $I$ is contractible to a point,
$E'$ is trivial and the choice of an isomorphism between $E'$ and
the product bundle provides $E'$ with a frame, i.e., a basis
$\{e_1(t),...,e_k(t)\}$ of $E'_t $ continuously depending on $t$.
Since  $E'_a =E_{p(a)}=E_{p(b)}=E'_b,$  $\{e_i(a)\mid 1\leq i\leq k\}$
 and $\{e_i(b)\mid 1\leq i\leq k\}$ are two bases  of the same vector space.
We define $w_1(E)\in\Z_2$ by
\be \label{whitney} w_1(E):=\sign
\det C,\ee where $C$ is the matrix expressing  the basis $\{e_i(b)\mid 1\leq
i\leq k\}$ in terms of  the basis $\{e_i(a)\mid 1\leq i\leq k\}$.

It is easy to see that $w_1(E)$ is independent from the choice of
the frame. We claim that  $w_1(E)=1$ if and only if $E$ is
trivial. The if part is an immediate consequence of the definition
of $w_1(E).$  On the other hand,  if $w_1(E)=1,$ then $\det C>0$
and there exists a path $C(t)$ with $C(a)=C$ and
$C(b)=\mathrm{Id}$. Now, $f_i(t) = C(t) e_i(t)$ is a  frame such
that $f_i(a)= f_i(b)$ and hence $\Phi(t, x_1,\ldots,x_k) =
\left(t,\sum x_i f_i(t)\right)$ induces  an isomorphism over $S^1$  between the
product bundle $S^1\times \R^k$ and $E.$ Thus $E$ is trivial.

\begin{remark} Under  the isomorphism $H^1(S^1;\Z_2)\cong\Z_2$, $w_1(E)$
can be identified with the first Stiefel-Whitney class of $E$.
\end{remark}

Our main result is:
\begin{theorem}\label{theorem} If the system \eqref{main-system}
verifies $(A0)$--$(A4)$ and if \be w_1(E^s(+\infty))\neq
w_1(E^s(-\infty)), \ee then for all $\varepsilon$ small enough
there is a homoclinic solution $(\lambda,\emph{\xx})$ of
\eqref{main-system} with $\|\emph{\xx}\|=\varepsilon$.
\end{theorem}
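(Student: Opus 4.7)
The plan is to translate the search for nontrivial homoclinics into a bifurcation problem for a parametrized family of $C^1$-Fredholm maps on $\cc(\RN)$ and then invoke the degree-theoretic criterion of \cite{Pej-Rab}. Define $G : S^1 \times \cc(\RN) \to \cc(\RN)$ by $G(\la, \xx) = S\xx - F(\la, \xx)$, where $F(\la, \xx) = (f_n(\la, x_n))_{n \in \Z}$. Conditions (A0)--(A2) will ensure that $G$ is continuous, $C^1$ in $\xx$ with continuous partials in $(\la, \xx)$, and that $G(\la, \oo) = \oo$. Condition (A3) then guarantees that the linearization $L_\la := D_\xx G(\la, \oo) = S - D_\xx F(\la, \oo)$ is Fredholm of index $0$ on $\cc(\RN)$: asymptotic hyperbolicity at $\pm\infty$ yields the Fredholm property, and the equality of stable dimensions at $\pm\infty$ forces vanishing index. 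Nontrivial homoclinic solutions of \eqref{main-system} then correspond exactly to nontrivial zeros of this family.

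Next, I would compute the index bundle $\mathrm{Ind}(L) \in \widetilde{KO}(S^1)$ of $\{L_\la\}_{\la \in S^1}$ via Theorem \ref{prop:ind}, which expresses $\mathrm{Ind}(L)$ in terms of the asymptotic stable bundles $E^s(\pm\infty)$. Since $w_1$ is a homomorphism into $\Z_2$, this yields
\[
w_1(\mathrm{Ind}(L)) = w_1(E^s(+\infty)) + w_1(E^s(-\infty)) \in \Z_2,
\]
so the hypothesis $w_1(E^s(+\infty)) \neq w_1(E^s(-\infty))$ makes $w_1(\mathrm{Ind}(L))$ the nontrivial element of $\Z_2$. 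Hence the index bundle is nonorientable over $S^1$.

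I would then argue by contradiction: suppose there exists $\ep_0 > 0$ with $G(\la, \xx) \neq \oo$ whenever $0 < \|\xx\| \leq \ep_0$. By (A4), $L_{\la_0}$ is invertible, so cutting $S^1$ at $\la_0$ produces a path of index-$0$ Fredholm operators with invertible endpoints whose parity $\sigma$, in the sense of Section 5, is determined by $w_1(\mathrm{Ind}(L))$; the nonorientability established above forces $\sigma = -1$. The homotopy variance property of the $C^1$-Fredholm degree from \cite{Pej-Rab} then implies that the degree of $G(\la, \cdot)$ on $B(\oo, \ep)$ must change sign as $\la$ traverses $S^1$ once, contradicting admissibility of this family of maps in the absence of zeros on $\partial B(\oo, \ep)$. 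Hence the assumption of no bifurcation is untenable, and a homoclinic solution with $\|\xx\| = \ep$ exists for every sufficiently small $\ep$.

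The main obstacle is combining Theorem \ref{prop:ind} with the parity-to-index-bundle dictionary: one must rigorously identify the first Stiefel--Whitney class of $\mathrm{Ind}(L)$ on $S^1$ with the parity of the cut path with invertible endpoints at $\la_0$, and then extract the bifurcation conclusion through the homotopy variance property of the Pejsachowicz--Rabier degree. All of this machinery is precisely what is assembled in Sections 3--5, so the proof amounts to combining these ingredients in the right order.
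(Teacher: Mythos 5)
Your proposal is correct and follows essentially the same route as the paper: reduce to a bifurcation problem for the $C^1$-Fredholm family $G_\la = S - F_\la$ on $\cc(\RN)$, compute $\Ind L$ via Theorem \ref{prop:ind}, pass to parity through Lemma \ref{parity-lemma} and $w_1$, and derive a contradiction from the homotopy variance property of the base point degree using $\la_0$ (where (A4) and the Fredholm alternative give invertibility of $L_{\la_0}$, hence a regular isolated zero with degree $1$). The one imprecision is the closing phrase ``contradicting admissibility'': the actual contradiction is that \eqref{homotopy} forces $1 = \sigma(M) \cdot 1$, i.e.\ $\sigma(L)=1$, which by Lemma \ref{parity-lemma} contradicts $w_1(E^s(+\infty)) \neq w_1(E^s(-\infty))$.
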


The proof will be presented in Section $6$.
\vskip 10pt

A point $\la_*\in S^1$ is a {\it bifurcation point} for homoclinic
solutions of \eqref{main-system}  from the    stationary branch
$(\la,\oo)$ if in every neighborhood of $(\la_*,\oo)$  there is a
point of a nontrivial homoclinic solution $(\la,\xx)$
 of $x_{n+1}=f_n(\la,x_n).$

By  Theorem \ref{theorem} we can find a sequence of  nontrivial homoclinic solutions $(\la_n,\xx_n)$ of
\eqref{main-system}  such that $\|\xx_n\|\ra 0.$ Since $S^1$ is compact $\la_n$ possesses a subsequence
converging to some $\la_*\in S^1.$ Hence  we obtain:

\begin{corollary}\label{cor1}
Under the assumptions of Theorem \emph{\ref{theorem}} there exists
at least one bifurcation point $\la_*\in S^1$ of nontrivial
homoclinic solutions from the  branch of stationary solutions. In
other words there exists a $\la_*\in S^1$  and a sequence $(\la_k,
\emph{\xx}_k)$ such that $\la_k\ra\la_*$ and $\emph{\xx}_k\neq
\emph{\oo}$ is a nontrivial   homoclinic trajectory of
$\emph{\ff}.$
\end{corollary}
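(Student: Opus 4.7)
The plan is to turn the quantitative existence statement of Theorem \ref{theorem} into the existence of a bifurcation point by a routine compactness argument on $S^1$. All of the analytical content is packaged inside Theorem \ref{theorem}; what remains is to pass to the limit along a subsequence of parameter values.

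First I would extract the required sequence of nontrivial homoclinic solutions. Choose any sequence $\varepsilon_k \searrow 0$ of positive reals small enough that Theorem \ref{theorem} applies to each. For each $k$, the theorem furnishes a homoclinic solution $(\la_k, \xx_k)$ of \eqref{main-system} with $\|\xx_k\| = \varepsilon_k$. Since $\varepsilon_k>0$, each $\xx_k$ is automatically nontrivial, so $\xx_k\neq\oo$, while $\|\xx_k\| = \varepsilon_k \ra 0$ forces $\xx_k\ra \oo$ in $\cc(\RN)$.

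Next I would use compactness of $S^1$. The sequence $\{\la_k\}\subset S^1$ admits a convergent subsequence $\la_{k_j}\ra \la_*$ for some $\la_*\in S^1$. Along this subsequence the pairs $(\la_{k_j}, \xx_{k_j})$ converge to $(\la_*, \oo)$ in $S^1\times \cc(\RN)$, and each $\xx_{k_j}\neq\oo$ is a nontrivial homoclinic trajectory of $\ff_{\la_{k_j}}$. By the definition of bifurcation point recalled immediately before the corollary, every neighborhood of $(\la_*, \oo)$ contains $(\la_{k_j}, \xx_{k_j})$ for all sufficiently large $j$; this witnesses that $\la_*$ is a bifurcation point and that $(\la_{k_j}, \xx_{k_j})$ is the required sequence.

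There is no genuine obstacle: the heavy lifting is the index-bundle and Fredholm-degree argument needed for Theorem \ref{theorem}, and once that quantitative theorem is in hand the corollary follows from compactness of $S^1$. The only small point worth noting is that nontriviality of the extracted $\xx_k$ is automatic from $\|\xx_k\|=\varepsilon_k>0$, so no separate argument is needed to keep the solutions away from the stationary branch.
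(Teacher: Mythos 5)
Your proof is correct and matches the paper's argument: the paper likewise takes $\varepsilon_k\ra 0$, invokes Theorem \ref{theorem} to obtain nontrivial homoclinic solutions with $\|\xx_k\|=\varepsilon_k$, and uses compactness of $S^1$ to extract a convergent subsequence of $\la_k$. No substantive difference.
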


Let us observe that if $\ff\colon  \Z\times  S^1\times \R^N\ra
\R^N$  verifies the assumptions of Theorem \ref{theorem}  and
$\tilde\ff \colon  \Z\times S^1\times \R^N\ra \R^N $ is defined by
$\tilde\ff = \ff + \hh, $ where $$\hh =(h_n) \colon  \Z\times
S^1\times \R^N\ra \R^N,$$   verifies  $(A0)$--$(A2)$ and  moreover

\begin{itemize}
\item[$(A3')$]  $ \displaystyle \frac{\partial h_n} {\partial x} (\la,0) \ra 0$ as $n\ra \pm\infty$  uniformly on
$\la,$
\item[$(A4')$]  $\displaystyle \sup\limits_{n\in \Z} \Bigg\|\frac{\partial h_n} {\partial x} (\la_0,0)\Bigg\|$ is small  enough,
\end{itemize}
then also $\tilde \ff$ verifies Assumptions $(A0)$--$(A4).$
Indeed, $(A3')$ for  $\hh$ implies   $(A3)$  for $\tilde\ff .$ On
the other hand, it is shown in the proof of Theorem \ref{theorem}
that $(A3)$ and $(A4)$  together imply  that  the operator
$L_{\la_0} \colon \cc(\RN) \ra \cc(\RN)$ defined by
$$L_{\la_0}\xx=(x_{n+1}-a_n(\la_0)x_n) $$
is  invertible.  Now, that $\tilde \ff$ verifies  $(A4)$ follows
from $(A4')$ and the fact that the set of all invertible operators
is open.

Summing up we have:

\begin{corollary}\label{cor2}
If \emph{$\ff$}
verifies  the assumptions of Theorem \emph{\ref{theorem}}  then any
perturbation \emph{$\tilde\ff=\ff+\hh$}
 as above must have  nontrivial homoclinic solutions  bifurcating  from the stationary  branch  at some point of the parameter space.
\end{corollary}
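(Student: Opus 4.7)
The plan is to verify that the perturbed family $\tilde\ff = \ff + \hh$ itself satisfies hypotheses $(A0)$--$(A4)$ of Theorem \ref{theorem} (with the \emph{same} asymptotic stable bundles as $\ff$, and therefore the same nontriviality condition on $w_1$), and then invoke Corollary \ref{cor1} applied to $\tilde\ff$ to produce the bifurcation point. Conditions $(A0)$, $(A1)$, $(A2)$ for $\tilde\ff$ are immediate by linearity: $(A0)$ follows from $(A0)$ for both $\ff$ and $\hh$; $(A1)$ and $(A2)$ follow because the modulus of continuity of $\partial_x \tilde f_n$ is controlled by the sum of those of $\partial_x f_n$ and $\partial_x h_n$, and $\sup_n\|\partial_x\tilde f_n\| \leq \sup_n\|\partial_x f_n\| + \sup_n\|\partial_x h_n\|$, both finite on bounded sets.

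The essential point is condition $(A3)$. Write $\tilde a_n(\la) = a_n(\la) + \partial_x h_n(\la,0)$. By $(A3)$ for $\ff$, $a_n(\la) \to a(\la,\pm\infty)$ uniformly in $\la$ as $n \to \pm\infty$, and by $(A3')$ the perturbation $\partial_x h_n(\la,0)$ tends to $0$ uniformly. Hence $\tilde a_n(\la) \to a(\la,\pm\infty)$ uniformly, so the asymptotic matrices of $\tilde \ff$ coincide with those of $\ff$. In particular, the limits are the same hyperbolic matrices, the dimension count at $\pm\infty$ is preserved, the asymptotic stable and unstable bundles for $\tilde\ff$ equal $E^s(\pm\infty)$ and $E^u(\pm\infty)$, and consequently $w_1(E^s(+\infty)) \neq w_1(E^s(-\infty))$ still holds.

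The only condition that requires a bit of care is $(A4)$, and here the argument the authors already hint at is a standard open-set perturbation. From $(A3)$ and $(A4)$ applied to $\ff$ the proof of Theorem \ref{theorem} (reviewed in later sections) shows that the bounded linear operator $L_{\la_0}\colon \cc(\RN)\to\cc(\RN)$, $L_{\la_0}\xx = (x_{n+1}-a_n(\la_0)x_n)$, is a linear homeomorphism. The analogous operator for $\tilde\ff$ is $\tilde L_{\la_0} = L_{\la_0} - K$, where $(K\xx)_n = \partial_x h_n(\la_0,0)\, x_n$. One checks immediately that $K$ is bounded on $\cc(\RN)$ with $\|K\| \leq \sup_n\|\partial_x h_n(\la_0,0)\|$. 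Choosing the smallness bound in $(A4')$ so that $\|K\| < \|L_{\la_0}^{-1}\|^{-1}$ makes $\tilde L_{\la_0} = L_{\la_0}(\Id - L_{\la_0}^{-1}K)$ invertible by Neumann series, so the linearized equation $x_{n+1} = \tilde a_n(\la_0) x_n$ admits only the trivial solution in $\cc(\RN)$; this is exactly $(A4)$ for $\tilde\ff$.

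With all the hypotheses of Theorem \ref{theorem} now verified for $\tilde\ff$, and the topological nontriviality condition $w_1(E^s(+\infty)) \neq w_1(E^s(-\infty))$ inherited unchanged, Corollary \ref{cor1} applied to $\tilde\ff$ yields a parameter value $\la_* \in S^1$ and a sequence of nontrivial homoclinic solutions $(\la_k,\xx_k)$ of $\tilde\ff$ with $\la_k \to \la_*$ and $\|\xx_k\| \to 0$, as claimed. The only step with any content is the Neumann-series perturbation bound; everything else is bookkeeping.
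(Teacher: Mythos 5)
Your proof is correct and follows essentially the same route as the paper: in the paragraph preceding Corollary~\ref{cor2} the authors argue that $(A3')$ gives $(A3)$ for $\tilde\ff$ (with the same asymptotic limits, hence the same bundles), that $(A3)$ and $(A4)$ make $L_{\la_0}$ invertible, and that $(A4')$ plus openness of the set of invertible operators yields $(A4)$ for $\tilde\ff$, after which Corollary~\ref{cor1} applies. Your Neumann-series estimate $\|K\|<\|L_{\la_0}^{-1}\|^{-1}$ is simply the explicit quantitative form of the paper's appeal to openness, and your remark that the asymptotic stable bundles (and hence $w_1$) are unchanged just spells out what the paper leaves implicit.
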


\section{The index bundle}
Let us recall that  a bounded operator $T\in \mathcal{L}(X,Y)$
(\footnote{By $\mathcal{L}(X,Y)$ we will denote the space of
bounded linear operators between two Banach spaces X and Y.}) is
Fredholm if it has finite dimensional kernel and cokernel.  The
index of a Fredholm operator  is by definition $\ind T:=\dim \Ker T -
\dim \Coker T.$ The Fredholm operators will be denoted by
$\Phi(X,Y)$ and those of index $0$ by $\Phi_0(X,Y).$

The index bundle generalizes to the case of families of Fredholm
operators the concept  of index of a single Fredholm operator. If
a family $L_\la$ of Fredholm operators depends continuously on a
parameter $\la$ belonging to some topological space $\Lambda$  and
if the kernels $\Ker L_\la$ and cokernels $\Coker L_\la$  form two
vector bundles $\Ker L$ and $\Coker L$ over $\Lambda,$ then,
roughly speaking, the index bundle is  $\Ker L-\Coker L$  where
one has to give a meaning
to the  difference by working in an
appropriate group generalizing $\Z$. We will first define
such a  group and then will see how to handle the case where
the kernels do not form a vector bundle.

If $ \Lambda$ is a compact topological space, the Grothendieck
group $KO(\Lambda )$ is the group completion of the abelian
semigroup $\text{Vect} (\Lambda )$ of all isomorphisms classes of
real vector bundles over  $\Lambda.$ In other words, $KO(\Lambda
)$   is the quotient of the semigroup $ \text{Vect}(\Lambda)$
$\times \text{Vect}(\Lambda)$ by the diagonal sub-semigroup.  The
elements of $KO(\Lambda)$  are called  virtual bundles.  Each
virtual bundle can be written  as a difference $[E] - [F]$ where
$E, F$ are vector bundles over  $\Lambda  $ and $[E]$ denotes the
equivalence class of $(E,0).$  Moreover, one can show that $ [E] -
[F]= 0$ in $KO(\Lambda )$ if and only if the two vector bundles
become isomorphic after the addition of a  trivial vector bundle
to both sides. Taking complex vector bundles instead of the real
ones leads to  the complex Grothendieck  group denoted by
$K(\Lambda).$ In what follows the trivial bundle  with fiber
$\Lambda\times V$ will be denoted by  $\Theta(V).$ The trivial bundle,   $\Theta(\R^N),$ will be simplified to  $\Theta^N.$

Let $X,\ Y$ be real Banach spaces and let  $L\colon
\Lambda\rightarrow \Phi(X,Y)$  be a continuous family of Fredholm
operators. As before $L_{\la}\in \Phi(X,Y)$ will  denote the value
of $L$ at the point ${\la} \in  \Lambda$. Since $\Coker L_\la$
is
finite dimensional, using  compactness of $ \Lambda,$ one can find
a finite dimensional subspace $V \hof\,  Y$ such  that
\begin{equation} \label{1.1}
\hbox{\rm Im}\,L_\la+ V=Y  \ \hbox{\rm for all }\  \la \in
\Lambda.
\end{equation}

Because of the transversality condition \eqref{1.1}  the family of
finite dimensional subspaces $E_{\la}=L_{\la}^{-1}(V)$ defines a
vector bundle over $ \Lambda$ with  total space
\[E= \bigcup_{\la \in \Lambda}\, \{\la\} \times E_\la.\]
Indeed, the  kernels of a family of surjective Fredholm operators
form a finite dimensional vector bundle \cite{La}.  Denoting with
$\pi $  the canonical projection of $Y$ onto $Y/V,$  from
\eqref{1.1} it follows that the operators $ \pi {L}_{\la}$  are
surjective with $\Ker\pi {L}_\la=E_\la,$
which  shows that $E\in
Vect(\Lambda).$

We define the {\it index bundle}  $\Ind L$ by:
\begin{equation} \label{defind}
\Ind L= [E]-[\Theta (V)] \in KO(\Lambda).
\end{equation}

Notice that the index bundle of a family of Fredholm operators of
index $0$ belongs to the reduced Grothendieck group $
\widetilde{KO}(\Lambda )$ which,
by definition,  is the  kernel of the rank
homomorphism $rk \colon KO(\Lambda) \rightarrow \Z$ given by
\begin{equation*}
\rk([E] -[F]) = \dim E_\la -\dim F_\la.
\end{equation*}
We will mainly, but not always, work with families of Fredholm
operators of index $0$. If $\Lambda =pt$ consists of just one
point, then the rank homomorphism $\rk$ is an isomorphism and the
index bundle coincides with the ordinary numerical index $\ind L =
\dim \Ker L - \dim \Coker L $ of a Fredholm operator $L$. The
index bundle enjoys the same nice properties of the ordinary
index. Namely, homotopy invariance, additivity with respect to
directs sums, logarithmic property under composition of operators.
Clearly it vanishes if $L$ is a family of isomorphisms. We will
use these properties in the sequel. The precise statements and
proofs can be found in \cite[Appendix A]{Pejs-1}.

It can be shown  that  any element $\eta \in
\widetilde{KO}(\Lambda)$ can be written as $[E] -[\Theta^N ].$ Moreover,
$[E] -[\Theta^N ] = [E'] -[\Theta^M] $ in $\widetilde{KO}(\Lambda
)$ if and only if there exist two trivial bundles $\Theta $ and
$\Theta'$ such that $E\oplus \Theta$ is isomorphic to  $E'\oplus
\Theta',$ (see \cite[Theorem 3.8]{Hus}).

The obstruction  $w_1(E)$  to the triviality of  vector bundle $E$
over $S^1$ defined in Section $2$  induces  a well defined
homomorphism $w_1\colon \widetilde{KO}(S^1) \ra \Z_2$ by putting
\begin{equation}\label{morewhit}
w_1([E] -[F]) = w_1(E) w_1(F).
\end{equation}

Indeed, taking $\Lambda =S^1$ we observe that  $w_1(E)$ remains
unmodified under addition of a trivial  vector bundle which, on
the basis of the above discussion,  proves that \eqref{morewhit}
is well defined.

\begin{proposition}\label{isomorphism-w}
The homomorphism   $w_1\colon \widetilde{KO}(S^1) \ra \Z_2$ is an
isomorphism.
\end{proposition}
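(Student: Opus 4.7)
The plan is to exploit the two facts already established in Section 2: first, that the quantity $w_1(E)$ for a vector bundle $E$ over $S^1$ is independent of the chosen frame, and second, that $w_1(E)=1$ holds if and only if $E$ is trivial. Combined with the stability remark recorded just before the proposition (every element of $\widetilde{KO}(S^1)$ has a representative of the form $[E]-[\Theta^N]$), both surjectivity and injectivity reduce to short verifications.

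For well-definedness on $\widetilde{KO}(S^1)$, I would recall the observation made immediately after \eqref{morewhit}: adding a trivial summand to $E$ preserves $w_1(E)$, because an existing frame on $E$ extends by a constant frame on the trivial summand and the transition matrix $C$ is replaced by the block-diagonal matrix $C\oplus \Id$, whose determinant has the same sign. Consequently the formula $w_1([E]-[F]) = w_1(E)\,w_1(F)$ descends to $KO(S^1)$, and then restricts to $\widetilde{KO}(S^1)$.

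For injectivity, I would take an arbitrary class $\eta\in\widetilde{KO}(S^1)$ and, using the representation theorem cited from \cite[Theorem 3.8]{Hus}, write $\eta=[E]-[\Theta^N]$ where $N=\rk E$. Then
\[
w_1(\eta)=w_1(E)\,w_1(\Theta^N)=w_1(E),
\]
since a constant frame shows $w_1(\Theta^N)=1$. If $w_1(\eta)=1$, the characterization of $w_1$ from Section 2 gives an isomorphism $E\cong\Theta^N$, whence $[E]=[\Theta^N]$ in $KO(S^1)$ and thus $\eta=0$.

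For surjectivity, I would exhibit the classical Möbius line bundle $\mu$ over $S^1$, constructed as the quotient of $[a,b]\times\R$ by identifying $(a,x)$ with $(b,-x)$. The tautological frame $e(t)\equiv 1$ on $[a,b]$ has transition matrix $C=(-1)$ at the endpoints, so $w_1(\mu)=\sign\det(-1)=-1$. Therefore $[\mu]-[\Theta^1]\in \widetilde{KO}(S^1)$ and $w_1([\mu]-[\Theta^1])=-1$, showing that $w_1$ hits the non-trivial element of $\Z_2$. Since there is no genuine obstacle in this argument, the only point requiring care is consistently matching ranks when writing a virtual bundle as $[E]-[\Theta^N]$; everything else follows directly from the preparatory material in Section 2.
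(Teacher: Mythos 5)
Your proof is correct and follows essentially the same route as the paper: well-definedness from invariance of $w_1$ under trivial summands, injectivity from the equivalence $w_1(E)=1 \Leftrightarrow E$ trivial combined with the representation $\eta=[E]-[\Theta^N]$, and surjectivity via a nontrivial bundle. The only difference is that you spell out surjectivity explicitly by exhibiting the M\"obius line bundle, whereas the paper leaves this implicit under the phrase \emph{follows again from the above discussion}.
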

\begin{proof} This  follows again  from the above discussion and the fact
that $w_1(E)=1$ implies that $E$ is a trivial vector bundle over
$S^1.$
\end{proof}

\section{The index  bundle of the family of operators associated to  linear asymptotically hyperbolic  systems}

In this section we will deal only with  linear asymptotically
hyperbolic systems $\aa \colon  \Z  \times S^1\ra GL(N)$, where
$GL(N)$ is the set of all invertible matrices in $\R^{N\times N}.$
This  means:
\begin{itemize}
\item[(a)] As $n \ra\pm \infty$ the sequence  $\aa(\la)=(a_n(\la))$ converges
uniformly with respect to $\la\in S^1$ to a family of matrices
$a(\la,\pm \infty)$.
\item [(b)]  $a(\la,\pm \infty)\in GL(N)$ is hyperbolic for all $\la \in S^1.$
\end{itemize}

Given a family $\aa$ of asymptotically hyperbolic systems parametrized by $S^1$ let us  consider the family of linear operators
$$L = \{L_\la \colon \cc(\RN)\ra \cc(\RN) \mid \la \in S^1 \} $$     defined by $ L_\la =S-A_\la,$ where $S$ is the shift operator and
$$A_\la \colon {\bf c}(\R^N)\ra {\bf c}(\R^N)$$ is defined by
$A_\la \xx\!:=(a_{n}(\la)x_n).$

 Since the  sequence  $(a_n(\la))$ converges uniformly,  it is bounded,   from which  follows immediately that   $A_\la$ and  $L_\la $  are
 well defined bounded operators. Moreover
 it is easy to
see that the map $A\colon S^1\ra \mathcal{L}({\bf c}(\R^N),{\bf
c}(\R^N))$ defined by $A(\la):=A_{\la}$ is continuous with respect
to the norm topology of $\mathcal{L}({\bf c}(\R^N),
{\bf c}(\R^N))$. Hence the same holds for the family $L.$

Clearly, $\xx=(x_n)\in \cc(\R^N)$ verifies a linear
difference equation $x_{n+1}=a_n(\la)x_n$ if and only if $L_{\la}\xx=0$.

By the discussion in the previous section  the families  $a(\la,\pm \infty)\in GL(N)$
define two vector bundles $E^s (\pm \infty)$ over $S^1.$  The next theorem  relates the index bundle of the family $L$ to  $E^s (\pm \infty).$

\begin{theorem}\label{prop:ind} Let
$ \aa\colon  \Z \times S^1 \ra GL(N)$ be
a continuous  map verifying
$(a)$ and $(b).$ Then the family $L\colon S^1\ra \mathcal{L}({\bf
c}(\R^N),{\bf c}(\R^N))$ verifies:

\begin{itemize}
\item [$(i)$] $L_\la$ is a Fredholm operator  for
all $\lambda\in S^1$.
\item [$(ii)$] $\Ind L= [E^s (+ \infty)]-[E^s(-\infty)] \in
{KO}(S^1)$.
\end{itemize}
\end{theorem}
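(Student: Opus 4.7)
My plan is to prove both statements by reducing to a model family in which the coefficients are piecewise constant in $n$, and then computing the index bundle by an explicit transversalizing subspace. Set $\tilde a_n(\la):=a(\la,+\infty)$ for $n\ge 0$ and $\tilde a_n(\la):=a(\la,-\infty)$ for $n\le -1$, and consider the linear homotopy $a^t_n(\la):=(1-t)a_n(\la)+t\tilde a_n(\la)$, $t\in[0,1]$, with associated operators $L^t_\la=S-A^t_\la$. For every $t$ the asymptotic matrices of $a^t$ are $a(\la,\pm\infty)$, and by hypothesis $(a)$ the convergence as $|n|\to\infty$ is uniform in both $\la$ and $t$. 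Hence $L^t_\la-L^1_\la$ is multiplication by a matrix sequence tending to $0$ as $|n|\to\infty$; such multipliers are compact on $\cc(\R^N)$, being operator-norm limits of their finite-rank truncations. It therefore suffices to prove (i) for the endpoint $t=1$---compact-perturbation invariance then yields (i) for every $t$---and by homotopy invariance of the index bundle, to compute $\Ind L^1$.

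Assume from now on $a_n(\la)=a^+(\la):=a(\la,+\infty)$ for $n\ge 0$ and $a_n(\la)=a^-(\la):=a(\la,-\infty)$ for $n\le -1$, and take the $N$-dimensional subspace $V:=\{v\,\delta_0\mid v\in\R^N\}\subset\cc(\R^N)$. Given $\yy\in\cc(\R^N)$, I would solve $L_\la\xx=\yy-v\,\delta_0$ by splitting at $n=0$: using the spectral decomposition $\R^N=E^s(a^+(\la))\oplus E^u(a^+(\la))$, the forward recursion $x_{n+1}=a^+(\la)x_n+y_n$ on $n\ge 1$ admits a solution decaying at $+\infty$ iff the unstable part of $x_1$ is a specific convergent Green-type summation in $\yy$, while the stable part of $x_1$ remains free in $E^s(a^+(\la))$; symmetrically, on $n\le -1$ the stable part of $x_0$ (relative to $a^-(\la)$) is determined by $\yy$ and $x_0^u\in E^u(a^-(\la))$ is free; the matching equation $x_1=a^+(\la)x_0+y_0-v$ at $n=0$ can then be solved for $v$ regardless of the free parameters. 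This establishes $\im L_\la+V=\cc(\R^N)$ for every $\la$ and at the same time shows $\Ker L_\la$ and $\Coker L_\la$ are finite-dimensional, proving (i).

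By the preceding analysis, an element $\xx\in L_\la^{-1}(V)$ is uniquely determined by the pair $(x_1,x_0)\in E^s(a^+(\la))\oplus E^u(a^-(\la))$, the evaluation $\xx\mapsto(x_1,x_0)$ being a linear isomorphism. It depends continuously on $\la$, since the spectral projectors of $a^\pm(\la)$ are continuous and the exponential decay of the dichotomies is uniform over the compact circle; hence as vector bundles over $S^1$,
\[L^{-1}(V)\;\cong\;E^s(+\infty)\oplus E^u(-\infty).\]
Combining with the splitting \eqref{dirsum}, which gives $[\Theta^N]=[E^s(-\infty)]+[E^u(-\infty)]$ in $KO(S^1)$, I conclude
\[\Ind L=[L^{-1}(V)]-[\Theta(V)]=[E^s(+\infty)]+[E^u(-\infty)]-[\Theta^N]=[E^s(+\infty)]-[E^s(-\infty)],\]
which is exactly (ii).

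The main delicacy lies in the piecewise constant step: one must keep careful track of which components of $(x_0,x_1)$ are prescribed by the data $\yy$ and which remain free, and verify that the identification $L_\la^{-1}(V)\cong E^s(a^+(\la))\oplus E^u(a^-(\la))$ is a bundle isomorphism rather than just a fiberwise linear bijection. Both requirements boil down to exploiting the uniform (in $\la\in S^1$) contraction/expansion provided by hyperbolicity together with the continuous dependence of the spectral projectors on $\la$; once this is verified, the remainder of the argument is formal manipulation in $KO(S^1)$.
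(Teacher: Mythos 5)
Your proof is correct, but it takes a genuinely different route from the one in the paper.  Both start by replacing $L_\la$ by the piecewise-constant ("jump") model $\bar L_\la$ via a compact perturbation, so the core difference is how $\Ind\bar L$ is computed.  The paper factors $\bar L_\la$ through a direct sum of two half-line operators via a commutative diagram $\bar L_\la = I\,(L^-_\la\oplus L^+_\la)\,J$; it invokes a lemma of Abbondandolo--Majer to show each half-line operator is \emph{surjective} with kernel $E^s(\la,+\infty)$ (respectively, after composing with isomorphisms, $E^u(\la,-\infty)$), and then applies the direct-sum and logarithmic properties of the index bundle, together with $\Ind J = -[\Theta^N]$, to get the answer.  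You instead produce a single $\la$-independent transversalizing subspace $V=\{v\,\delta_0 : v\in\R^N\}$, verify $\im \bar L_\la+V=\cc(\R^N)$ and $\dim\bar L_\la^{-1}(V)<\infty$ by solving the recursion on each half-line with appropriate Green-type sums (getting (i) along the way), and then identify $\bar L^{-1}(V)$ as a bundle with $E^s(+\infty)\oplus E^u(-\infty)$ by the evaluation $\xx\mapsto(x_1,x_0)$, after which (ii) is formal from \eqref{dirsum}.  Your route is self-contained in the sense that it does not need the cited half-line lemma or the logarithmic property for index bundles; on the other hand, it asks you to verify directly the uniform dichotomy estimates behind the Green-type sums and behind the claim that the evaluation is a bundle (not merely fiberwise linear) isomorphism, and you correctly flag these as the delicate points.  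If you write this out fully, the two places that deserve explicit estimates are: the discrete convolution bound showing the free stable component on $n\ge 1$ stays in $\cc$, and the uniformity in $\la\in S^1$ of the exponential decay of $(a^\pm(\la))^n$ on the respective spectral subspaces, which makes $(\la,x_1,x_0)\mapsto\xx$ continuous in the sup norm.
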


\begin{remark}
In the proof of Theorem \ref{prop:ind} we will  also compute the
index  of  $L_{\la}$ in terms of dimensions of the stable spaces
at $\pm \infty.$  This is far from being new, and similar
computations using exponential dichotomies can be found in many
places, e.g., \cite{Ba,Sa}.  Here we are not interested  in the
index but rather in the index bundle and our theorem can be
considered an extension to the case of families of the
computations  quoted above.
\end{remark}

\begin{proof}
 Let $\bar{\aa} \colon S^1\times \Z \ra GL(N) $
be defined by
\begin{equation}\label{const}
 \bar{\aa}(\la,n)=( \bar a_n(\la))=
\begin{cases} a(\la, +\infty) & \hif\; n\geq 0,
\\ a(\la,- \infty)
&\hif\; n<0.
\end{cases}
\end{equation}

Put $X:={\bf c}(\R^N).$ Fix $\la\in S^1$ and  denote  by
   $\bar A_\la \in \mathcal{L}(X,X)$
the  operator associated to $\bar{\aa}_\la.$
We claim that the
operator  $K_\la=A_\la -\bar A_\la$ is a compact operator. To this
end, we will show that $K_\la$ is the limit  (in the norm topology
of $\mathcal L (X,X)$) of a  sequence of operators $\tilde
K^m_\la$ with finite dimensional range.
 We observe that $K_\la$ is defined by  $K_{\la}\xx=(k_n(\la) x_n),$  where
 $k_n(\la)=a_n(\la)-\bar a_n(\la)$ and define

\begin{equation}
\label{finite} \tilde K^m_\la\xx=
\begin{cases} k_n(\la) x_n & \hif \;|n|\leq m,  \\ 0 &\hif \;
|n|>m.
\end{cases}
\end{equation}
Clearly $\im \tilde K^m_\la$ is  finite dimensional. We are to prove that
\begin{equation}\label{wz} \sup_{\|\xx\|=1}
\|(K_{\la}-\tilde
K^m_{\la})\xx\|\xrightarrow[m\rightarrow\infty]{}0,
\end{equation}
for $\xx\in X$. Observe that
\begin{align}\label{wz1}
\begin{split}
\|(K_{\la}-\tilde K^m_{\la})\xx\|=\sup_{|n|>m}\|k_n(\la)x_n\| \geq
\sup_{|n|>m+1}\|k_n(\la)x_n\|=\|(K_{\la}-\tilde
K^{m+1}_{\la})\xx\|,
\end{split}
\end{align}
for all $m\in \N$. Since
\begin{equation*}
\lim_{|n|\ra \infty} k_n(\la)=0,
\end{equation*}
we infer that for all $ \varepsilon>0$ there exists $n_0>0$ such
that for all $|n|> n_0$ and $\|\xx\|=1$ one has
\begin{equation*}
\|k_n(\la)x_n\|<\varepsilon.
\end{equation*}
Consequently, for all $\varepsilon>0$ there exists $n_0>0$ such
that
\begin{equation}\label{wz2}
\sup_{
\|\xx\|=1}\|(K_{\la}-\tilde K^{n_0}_{\la})\xx\|\leq\varepsilon.
\end{equation}
Now taking into account (\ref{wz1}) and (\ref{wz2}), we deduce
that for all $\varepsilon>0$ there exists $n_0>0$ such that for
all $m\geq n_0$ one has
\begin{equation} \sup_{ \|\xx\|=1} ||(K_{\la}-\tilde
K^{m}_{\la})\xx||\leq\varepsilon,
\end{equation}
which proves (\ref{wz}) and the compactness of the operator $K_\la.$

Let $\bar L_\la =S-\bar{A}_\la.$  Then $L_\la -\bar L_\la= K_\la$  and hence  the family $L$ differs
from the family $\bar L$ by a family of compact operators. Therefore    $L_\la $ is Fredholm if and only if  $\bar L$
 is Fredholm  and moreover the  homotopy invariance of the index bundle applied to the homotopy $ H(\la,t) = \bar L_\la + t K_\la$
 shows that $\Ind \bar L = \Ind L.$  Hence in order to prove the theorem  we can assume  without loss of generality
 that $\aa$ has already the special  form of \eqref{const}, which we will do from now on.
   Let
\begin{align*}
\cc^+_k&=\{\xx \in {\bf c}(\R^N) \mid x_i=0 \hfor i<k\},\\
\cc^-_k&=\{\xx \in {\bf c}(\R^N) \mid x_i =0 \hfor i>k\}.
\end{align*}
Both $\cc^\pm_k$ are closed subspaces of ${\bf c}(\R^N)$. The space
$\cc^+_k$ can be isometrically identified with
$$ \cc_k(\R^N):=\{\xx\colon[k,\infty)\cap\Z  \ra \R^N \mid
\lim_{n\ra \infty } x_n =0\}$$
and similarly for $\cc^-_k.$

Put $ X^+ =Y^+ = \cc_0^+$ and $X^- = \cc^-_0, \ Y^- = \cc^-_{-1}$.
Let us consider four linear operators
$I\colon Y^- \oplus Y^+ \ra X,$
$J\colon X \ra X^- \oplus X^+,$
$L^+ _\lambda \colon X^+ \ra Y^+$ and $L^- _\lambda \colon X^- \ra
Y^-$ defined respectively by
\begin{align*}
I(\xx,\yy)&=\xx+\yy,
\\
J(\xx)(n)&=\begin{cases} (x_0,x_0)& \hif\; n=0,\\(x_n,0) & \hif
\;n<0,
\\
(0,x_n)&
\hif \;n>0,
\end{cases}
\\
(L_\la^+\xx)(n)&=
\begin{cases} x_{n+1} -a(\la,+ \infty)x_n
& \hfor n\geq 0,
\\ 0
&\hfor n<0,
\end{cases}
\\
(L^-_{\la}\xx)(n)&=
\begin{cases}
0 & \hfor n>-1,
\\  x_{n+1} - a(\la,- \infty)x_n
& \hfor n\leq -1.
\end{cases}
\end{align*}
We  decompose $L_{\la} \colon X\ra X$ via the following
commutative diagram:

\begin{equation}
\label{eq:homdiag}
\begin{diagram}
\node{ X^{-} \oplus X^{+}}  \arrow{e,l}{L^-_\lambda \oplus
L^+_\lambda} \node{Y^{-} \oplus Y^{+}} \arrow{s,b}{I}
\\
\node{X}\arrow{n,t}{J}\arrow{e,b}{L_\lambda} \node{X.}
\end{diagram}
\end{equation}
The commutativity of
 diagram \eqref{eq:homdiag} is easy to check.

Indeed, one has

\begin{align*}
I(L_{\la}^-\oplus L_{\la}^+)J\xx(n)=
L_{\la}^-J\xx(n)+L_{\la}^+J\xx(n)=\begin{cases}
(L_{\la}^+\xx)(n) & \hif\;n\geq 0,\\
(L_{\la}^-\xx)(n) & \hif\;n<0,\end{cases}
\end{align*}
which is the same as
\begin{equation} \label{operator}
(L_{\la} \xx)(n) =
\begin{cases}
x_{n+1} - a(\la, +\infty)x_n  & \hif\; n\geq 0,
\\x_{n+1}- a(\la,- \infty)x_n &\hif\;  n<0.
\end{cases}
\end{equation}

Next, we will show that $L_\la^\pm \colon X^\pm \ra Y^\pm$ are Fredholm
and we will compute the index bundles of $L^\pm $.

For $L_\la^+$ this is the content of the following  Lemma:

\begin{lemma}\label{ab-lemma} \emph{(\cite[Lemma
2.1]{Ab-Ma2})} Let  $a\in GL(N)$ be an  hyperbolic matrix.  Then
the  operator $S-A\colon \cc_0^+\ra \cc_0^+,$
defined by
\begin{equation*}
((S-A)\emph{\xx})(n)=
\begin{cases}
x_{n+1}-ax_{n} & \emph{\hif}\; n\geq 0, \\
0 &\emph{\hif}\; n<0,
\end{cases}
\end{equation*}
is surjective with
\begin{equation*}
\ker(S-A)=\{\emph{\xx}\in \cc_0^+\mid x_{n+1}=a^nx_0 \text{ for all
$n\geq 0$ and } x_0\in E^s(a)\}.\end{equation*}
\end{lemma}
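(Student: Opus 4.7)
My plan is to exploit the spectral decomposition $\RN = E^s(a) \oplus E^u(a)$ associated to the hyperbolic matrix $a$. Since $a$ preserves each summand, the operator $S-A$ respects this decomposition, and the problem splits into a ``stable'' part on which $a$ contracts under forward iteration and an ``unstable'' part on which $a^{-1}$ is the contraction. The kernel analysis then reduces to forward iteration on the stable side, and surjectivity reduces to two discrete variation-of-constants formulas.

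For the kernel, I would start with $\xx \in \cc_0^+$ satisfying $(S-A)\xx = \oo$. The equation reads $x_{n+1} = a x_n$ for $n \geq 0$ (while $x_n = 0$ for $n < 0$ is built into the definition of $\cc_0^+$), so $x_n = a^n x_0$. The decay condition $x_n \to 0$ inherent to $\cc_0^+$ is exactly the characterization of the stable subspace recalled in Section 2, hence $x_0 \in E^s(a)$; conversely, every such $x_0$ generates an element of the kernel. This gives the stated description and, in passing, identifies $\ker(S-A) \cong E^s(a)$ via $\xx \mapsto x_0$.

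For surjectivity, given $\yy \in \cc_0^+$, I would decompose $\xx = \xx^s + \xx^u$ and $\yy = \yy^s + \yy^u$ along the spectral splitting, writing $a^s := a|_{E^s(a)}$ (spectral radius $< 1$) and $a^u := a|_{E^u(a)}$ (invertible, with $(a^u)^{-1}$ of spectral radius $< 1$). On the stable component, take $x_0^s = 0$ and iterate forward to get
\[
x_n^s = \sum_{k=0}^{n-1} (a^s)^{n-1-k} y_k^s.
\]
On the unstable component, rewrite the equation as $x_n^u = (a^u)^{-1}(x_{n+1}^u - y_n^u)$ and iterate forward in the backward direction, obtaining the Green's-function expression
\[
x_n^u = -\sum_{k=n}^{\infty} (a^u)^{-(k-n+1)} y_k^u.
\]
A direct substitution confirms that $\xx = \xx^s + \xx^u$ solves $(S-A)\xx = \yy$.

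The only real technical point is verifying $\xx \in \cc_0^+$. This follows from the spectral radius estimates $\|(a^s)^k\| \leq C\rho^k$ and $\|(a^u)^{-k}\| \leq C\rho^k$ for some $\rho \in (0,1)$, combined with $\|\yy\| < \infty$ and $y_n \to 0$, by a short epsilon-splitting argument (split each sum at a cutoff beyond which $\|y_k\|$ is small). I anticipate no essential obstacle: the lemma is the standard discrete, half-line analogue of the variation-of-constants formula for hyperbolic linear equations, and the spectral decomposition converts the vector-valued problem into two uncoupled Duhamel sums transparently controlled by the respective contractions.
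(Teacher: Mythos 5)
Your proposal is correct and follows essentially the same route the paper points to: the paper simply cites Abbondandolo--Majer and remarks that the lemma is proved by constructing an explicit right inverse, which is precisely what your spectral splitting into stable and unstable Duhamel sums produces. (Incidentally, your formula $x_n = a^n x_0$ is the correct one; the $x_{n+1}=a^n x_0$ printed in the paper's statement is a typo.)
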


This lemma was proved in \emph{\cite[Lemma
2.1]{Ab-Ma2}}  by  constructing an explicit right inverse to the operator
 $S-A\colon \cc_0^+\ra \cc_0^+.$
 \qed

By Lemma \ref{ab-lemma}
\begin{equation}\label{ker+}
\Ker L^+_{\la}=\{\xx\in X^+ \mid x_n =
a(\la,+\infty)^n x_0
 \hand x_0\in E^s(\la,+\infty)\}.
\end{equation}
Hence the transformation  $\xx \mapsto x_0 $ defines an
isomorphism between $\Ker L^+ $ and $E^s(\la,+\infty),$ which is
finite dimensional. Being $\Coker L_\la =0,$
$L^+_{\la}$ is
Fredholm with $\ind L^+_{\la}=\dim E^s(\la,+\infty).$ Clearly the
index bundle  $\Ind L^+ = [E^s(+\infty)].$

  We will reduce the calculation of $\Ind L^-$ to
Lemma \ref{ab-lemma} as follows:

Put $Y^-:= \cc^-_{-1} $
and $X^-:=\cc^-_0$
and consider the  family of isomorphisms \\ $ B
=\{B_{\la} \colon Y^- \ra Y^-\} $  defined by
\begin{equation*}
(B_{\la}\xx)(n)=
\begin{cases}
0
& \hfor n>-1,  \\
- a^{-1}(\la,-\infty)x_n
&\hfor n\leq -1.
\end{cases}
\end{equation*}
We compose $L^-_{\la} \colon X^- \ra Y^-$ on the right with the
isomorphism $B_{\la} \colon Y^- \ra Y^-$ followed by the negative
shift $ S^{-1}$ viewed as an operator from $Y^-$ to $X^-$.  Since
both operators are isomorphisms the composition does not affect
 the Fredholm property. On the other hand
considering $S^{-1}$ as a constant  family of
isomorphisms, by logarithmic property of the index bundle, $\Ind
S^{-1}B L^- =\Ind L^-.$
Hence the index bundle of $L^-$ coincides with the index bundle of the family $D=S^{-1}B L^-$.
Observe now that, if $\xx\in Y^-$, then
\begin{equation*}
(B_{\la} L^-_{\la})\xx)(n)=
\begin{cases}
0 & \hfor n>-1,  \\
x_n  - a^{-1}(\la,-\infty)x_{n+1} &\hfor n\leq -1.
\end{cases}
\end{equation*}
But since $S^{-1}\xx=(x_{n-1}),$ one obtains
\begin{equation*}
(D_{\la}\xx)(n)=
\begin{cases}
0 & \hfor n>0,  \\
x_{n-1} -  a^{-1}(\la,-\infty)x_n &\hfor n\leq 0.\end{cases}
\end{equation*}

Thus  $D_{\la} \colon X^-\ra X^-$  is
the same type of operator as $L^+_{\la}$ but with $n$ going  from $0$ to
$-\infty.$
By Lemma \ref{ab-lemma}, each  $D_\la$ is surjective.  Moreover,  \begin{equation}\label{ker-} \Ker D_{\la} =\Ker L^-_{\la} =\{\xx\in X^- \mid
 x_n=a(\la,-\infty)^{n}x_0
 \hand x_0\in E^u(\la,-\infty))\}\end{equation} is isomorphic to
$ E^u(\la,-\infty)$.

  Summing up, we have obtained that
$\Ind\,L^+= [E^s( +\infty)]$  and  $\Ind L^-= [ E^u( -\infty)].$
In particular we have
\begin{equation}\label{dim+}
\ind L^+_\la = \dim  E^s(\la,+\infty)
\quad \text{and}\quad \ind
L^-_\la = \dim  E^u(\la,-\infty).
\end{equation}

With this at hand we  can compute the index bundle of $L$  completing  the proof of the theorem. Let us notice firstly that $I$ and $J$ are Fredholm operators.
Indeed,  $ I \colon   Y^-\oplus Y^+ \ra  X $   is clearly an
isomorphism,  and  the map $ J\colon X \ra X^-\oplus
X^+$ is a  monomorphism  whose image is given by $\im J
=\{(\aa,\bb)\in X^-\oplus X^+\mid a_0 \nolinebreak=b_0\}.$ Putting
$P\colon X^-\oplus X^+\ra \R^N$ by $P(\aa,\bb):=a_0-b_0,$ for
$\aa\in X^-$ and $\bb\in X^+$, one obtains that $\im J=\Ker P.$
But since $P$ is an epimorphism, we deduce that $\Coker
J=X^-\oplus X^+/\Ker P \simeq \R^N$ and therefore $J$ is Fredholm
of index $-N$. From the commutativity of
 diagram \eqref{eq:homdiag} and \eqref{dim+} it
follows that $L_{\la}=I(L^-_{\la}\oplus L^+_{\la})J$ is
Fredholm and
\begin{equation}\label{dim}
\begin{array}{ll}
&\ind(L_{\la})=\ind(I)+\ind(L^-_{\la}\oplus L^+_{\la})+\ind(J)=\\
&\dim  E^s(\la,+\infty)+\dim  E^u(\la,-\infty)-N=\\ &\dim
E^s(\la,+\infty)-\dim  E^s(\la,-\infty).
\end{array}
\end{equation}

As for $(ii)$, considering $I$ and $J$ as constant families of
Fredholm operators,  $\Ind\,I=0,\,
\Ind\,J=-[\Theta(\R^N)].$  Using  the logarithmic and   direct sum properties of
the index bundle together with  \eqref{dirsum}, we obtain
\begin{equation*}
\Ind L = [E^u( -\infty)] +[E^s( +\infty)]-[\Theta(\R^N)]=[E^s(
+\infty)]-[E^s( -\infty)],
\end{equation*}
which proves $(ii).$
\end{proof}

\begin{remark} \label{ker} Notice that from  \eqref{ker+}, \eqref{ker-} and \eqref{operator}  it follows that
in the case  of systems of the special  form \eqref{const}
elements  of  $\Ker L_{\la}$ are sequences $(x_n)\in X$ such that
$x_0\in E^s(\la,+\infty) \cap E^u(\la,-\infty)$
and
$$x_n=a(\la,+\infty)^{n}x_0,  \hfor  n \geq 0   \hand  x_n=a(\la,-\infty)^{n}x_0,  \hfor
n \leq 0.$$
\end{remark}

\section{Parity and topological degree  of $C^1$-Fredholm maps}

In order to deal with the nonlinear aspects of the problem we will use an extension of the well known Leray-Schauder
degree to  proper Fredholm maps of index $0$ introduced in  \cite{Pej-Rab} under the name of {\it base point degree.}
This construction uses a homotopy invariant of paths of Fredholm operators of index $0$  called {\it parity}  which is
closely related to the index bundle.  We will briefly review  the concept of parity  and the construction of the  base
point degree. We are specially interested in the particular form of the homotopy property of the base point degree since
it represents  the main argument  in our proof of Theorem
\ref{theorem}.

From now on  we will consider only Fredholm operators of index $0$.

Given a continuous map $L\colon [a,b]\ra \Phi_0(X,Y),$ a
 {\it regular parametrix} (or regularizator)  for
the path $L$ is a path of isomorphisms $P\colon [a,b]\ra\, Iso(Y,X)$ such that $ L_t P_t = \Id_Y -K_t$  and $P_tL_t =\Id_X -K'_t,$
where  $K_t,K'_t$ are operators of finite rank.

Every path in $\Phi_0(X,Y)$ possesses at least one parametrix. Below  we describe  a construction related to the index bundle
(see  \cite{Fi-Pej-88} for details):

Given $L\colon [a,b]\ra \Phi_0(X,Y)$, arguing
as in the construction of the index bundle (see Section $3$), we
take a finite dimensional subspace $V$ of $Y$ and consider the
vector bundle
\begin{equation}
E= \bigcup_{t \in [a,b]}\, \{t\} \times L_{t}^{-1}(V).
\end{equation}
It is easy to see that  $\dim E_t =\dim V$, where
$E_t:=L_{t}^{-1}(V).$ Since $E$ is a trivial bundle  there is a
vector bundle isomorphism $T\colon E\ra \Theta(V) = [a,b] \times
V.$ Let $Q_t$ be a family of projectors of $X$ with $\text{Im }
Q_t=E_t,$ let $Q'$ be a projector with $\Ker\;Q'=V$ and let $A_t =
Q'L_t + T_tQ_t$. It is easy to see that $A_t$ is an isomorphism
for any $t\in [a,b].$ Its inverse $P_t:=A^{-1}_t$ is  a {\it
regular parametrix} for  $L$ because, as it is easy to see, $ L_t
P_t = \Id_Y -K_t$ with $\text{Im } K_t \subset V$ and $P_tL_t
=\Id_X -K'_t$ with $\text{Im } K'_t \subset E_t.$

 Let now $L\colon [a,b]\ra \Phi_0(X,Y)$ be a path such that both $L_a$ and $L_b$ are invertible operators. Let $P$ be a parametrix for $L.$  Then $ L_t P_t = \Id_Y
-K_t$  is invertible
for $t=a$ and $t=b$, and so are  its restrictions  $C_t \colon V \ra V$  to any finite dimensional subspace V  containing the images of $K_t.$

The {\sl parity} of the path $L$ is the element $\sigma(L) \in
\Z_{2}= \{1,-1\}$ defined  by
\begin{equation*}
\sigma(L) = \sign\det C(a)\sign \det C(b).
\end{equation*}
It is easy to see that this definition is independent of the
choices involved and that the parity is invariant under homotopies
of paths with invertible end points. Moreover it has the following
multiplicative property:  if $\{I_k, 1\leq k\leq m\}$  is a
partition of $I=[a,b]$ then \begin{equation} \label{mult}
\sigma(L) = \prod_{k=1}^m  \sigma(L_{\mid{I_k}}). \end{equation}   It can be
shown  that
$\sigma(L)= 1$ if and only if $L$ can be deformed to a
family of invertible operators by a homotopy  which keeps the  end
points invertible (see  \cite{Fi-Pej-88}).

If the path $L$  is closed, i.e.,    $L_a=L_b,$ then, via the identification $S^1\simeq [a,b]/\{a,b\}$   we can consider the path $L $ as a map $L
\colon S^1\rightarrow \Phi_{0} (X,Y)$  and  relate  the
parity  of a closed path with the obstruction to triviality  $w_1\colon
\widetilde{KO}(S^1) \ra \Z_2.$

\begin{lemma}\label{parity-lemma}
Under the above assumptions,
\begin{equation}\label{paritywhitney}
\sigma(L) = w_1(\Ind L).
\end{equation}
\end{lemma}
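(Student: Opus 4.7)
Since $L$ is a closed loop, $L_a=L_b$ is an isomorphism. My plan is to compute both sides of \eqref{paritywhitney} from the same geometric data, namely the vector bundle $E=\bigcup_t\{t\}\times L_t^{-1}(V)$ together with a trivialization of its pullback to $[a,b]$. Because the parity is homotopy invariant under homotopies with invertible endpoints, I can use \emph{any} parametrix to compute $\sigma(L)$; I will use the special parametrix built from the index-bundle data as described in the paragraph before the lemma. Precisely, pick $V\subset Y$ with $\mathrm{Im}\,L_t+V=Y$ for all $t$, trivialize the (trivial on $[a,b]$) pullback of $E$ by a frame $\{e_1(t),\dots,e_k(t)\}$, fix a basis $\{v_1,\dots,v_k\}$ of $V$, and let $T_t\colon E_t\to V$ send $e_i(t)\mapsto v_i$. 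Let $Q_t$ be a projector of $X$ onto $E_t$, $Q'$ a projector of $Y$ with $\ker Q'=V$, and set $A_t=Q'L_t+T_tQ_t$ with $P_t:=A_t^{-1}$.

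Next I would identify the restriction operator appearing in the definition of the parity. From $A_tP_t=\mathrm{Id}_Y$, decomposing $Y=\mathrm{Im}\,Q'\oplus V$, one gets $Q'L_tP_t=Q'$ and $T_tQ_tP_t=\mathrm{Id}_Y-Q'$, so $L_tP_t=Q'+(\mathrm{Id}_Y-Q')L_tP_t$, i.e.\ $K_t:=\mathrm{Id}_Y-L_tP_t$ has image in $V$. A direct computation (solving $A_tx=v$ with $v\in V$ forces $x=T_t^{-1}(v)\in E_t$, hence $L_tx\in V$) yields
\begin{equation*}
C_t:=L_tP_t|_V\;=\;L_t\circ T_t^{-1}\colon V\longrightarrow V,
\end{equation*}
so in the bases $\{v_i\}$ the matrix of $C_t$ has columns given by the coordinates of $L_t(e_i(t))$ in the basis $\{v_j\}$.

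The key identification then comes from closing the loop. Since $L_a=L_b$ we have $E_a=L_a^{-1}(V)=L_b^{-1}(V)=E_b$ as subspaces of $X$, so the two bases $\{e_i(a)\}$ and $\{e_i(b)\}$ live in the same vector space, and the change-of-basis matrix $C$ with $e_i(b)=\sum_j c_{ji}e_j(a)$ is precisely the matrix used in the definition \eqref{whitney} of $w_1(E)$. Applying the common operator $L_a=L_b$ to both sides gives $C_b(v_i)=L_a(e_i(b))=\sum_j c_{ji}L_a(e_j(a))=\sum_j c_{ji}C_a(v_j)$, i.e.\ $C_b=C_a\,C$ as endomorphisms of $V$. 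Taking determinants, $\det C=\det C_b/\det C_a$, and hence
\begin{equation*}
w_1(E)\;=\;\sign\det C\;=\;\sign\det C_a\cdot\sign\det C_b\;=\;\sigma(L).
\end{equation*}
Since $\Ind L=[E]-[\Theta(V)]$ and $\Theta(V)$ is trivial, $w_1(\Ind L)=w_1(E)w_1(\Theta(V))=w_1(E)$, giving \eqref{paritywhitney}.

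The genuinely delicate point I expect to have to check carefully is the first: verifying that $A_t$ is indeed an isomorphism (so $P_t=A_t^{-1}$ is defined) and that $A_t^{-1}(V)=T_t^{-1}(V)=E_t$, because this is what produces the clean formula $C_t=L_tT_t^{-1}$ that drives the whole argument. Everything afterwards—the closing-up computation $C_b=C_aC$ and the identification with the transition matrix defining $w_1(E)$—is then essentially bookkeeping, but needs the convention for the frame to be rigidly the same on both sides.
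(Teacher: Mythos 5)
Your argument is correct, and it takes a genuinely different route from the paper's. The paper proves the lemma indirectly: it first invokes Proposition \ref{isomorphism-w} to reduce \eqref{paritywhitney} to the equivalence $\sigma(L)=1 \Leftrightarrow \Ind L=0$, and then proves each implication separately (for $\Ind L=0\Rightarrow\sigma(L)=1$ it enlarges $V$ so that $E$ itself becomes trivial, and for $\sigma(L)=1\Rightarrow\Ind L=0$ it modifies the parametrix to close up over $S^1$ and builds a homotopy $P_t^{-1}(\Id - sK_t)$ to a family of isomorphisms). You instead compute both sides of the identity from the same transition matrix $C$ of $E$: using the index-bundle parametrix $A_t=Q'L_t+T_tQ_t$ you obtain the clean formula $C_t=L_tT_t^{-1}$, close the loop to get $C_b=C_a\,C$, and read off $\sigma(L)=\sign\det C_a\cdot\sign\det C_b=\sign\det C=w_1(E)=w_1(\Ind L)$. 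Your argument is more explicit and constructive -- it neither needs Proposition \ref{isomorphism-w} nor the (somewhat delicate) closing-up-a-parametrix step of the paper's reverse implication, and it produces the equality directly rather than as a dichotomy. The one point you flagged as delicate (invertibility of $A_t$ and $A_t^{-1}(V)=E_t$) does hold: writing $X=E_t\oplus\ker Q_t$ and $Y=L_t(\ker Q_t)\oplus V$, the map $A_t$ splits as the direct sum of $Q'L_t\colon\ker Q_t\to\im Q'$ and $T_t\colon E_t\to V$, both isomorphisms, which also gives $A_t^{-1}(V)=E_t$ as you need.
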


\begin{proof}  Since, by Proposition \ref{isomorphism-w}, $w_1$ is an
isomorphism  of $\widetilde{KO}(S^1)$ with $\Z_2$ it is enough to
check that $\sigma(L)= 1$ if and only if $\Ind L =0.$ Let us
recall that two bundles are {\it stably equivalent} if  they
become isomorphic after  addition of trivial bundles on both
sides. It is well known \cite{Hus} that stable equivalence classes
form a group isomorphic to the reduced Grothendieck group
$\widetilde{KO}(\Lambda ).$

Since the index bundle of a family of Fredholm  operators of index
$0$ belongs to $\widetilde{KO}(\Lambda),$
it follows that $\Ind L$
can be identified with the stable equivalence class of the vector
bundle $E=\bigcup_{\la \in \Lambda} \, \{\la\} \times
L_{\la}^{-1}(V)$  arising in the construction \eqref{1.1}.

If $\Ind L =0 \in \widetilde{KO}(S^1)$, then, for some $k \geq 0,$
$E\oplus \Theta(\R^k)$ is isomorphic to the trivial bundle
$\Theta(V\oplus \R^k),$ where $V$ is as in \eqref{1.1}.  Taking in
the definition of the index bundle in  \eqref{defind}  a larger
subspace $V'$ such that $V'/V\cong \R^k$  we can assume that $E$
itself  is trivial.  If we use such a $V'$ in the construction of
a parametrix  for $L \colon [a,b]\ra \Phi_0(X,Y)$ as described above, then we get
 $\sigma(L)=1.$

On the other hand, if $\sigma(L)= 1$, then one can modify any
parametrix of $L$ on   $[a,b] $ to a parametrix $P$ with
$P_a=P_b,$ which defines $P$ on $S^1.$  Then for  any $t\in S^1$
we have $P_tL_t = \Id _Y -K_t$ with $K_t$  compact and  therefore
$$H(t,s)= P^{-1}_t (\Id _Y - sK_t) $$ is a homotopy in $\Phi_0(X,Y)$ between $L$ and a family of isomorphisms, which implies that $\Ind L=0.$ \end{proof}
\sk
Now let us sketch  the construction of the  base point degree in \cite{Pej-Rab}.

Let $\mathcal{O}\subset X$ be an open simply connected set and let
$f\colon \mathcal{O}\ra Y$ be a $C^1$-Fredholm map of index $0$
that is proper on closed bounded subsets of the domain (recall
that a $C^1$-map $f\colon \mathcal{O}\to Y$ is Fredholm of index
$0$ if the Fr\'{e}chet derivative $Df(x)$ of $f$ at $x$ is a
Fredholm operator of index $0$, for all $x\in \mathcal{O}$). Using
the parity we can assign to each regular point (\footnote{ $p$ is a
regular point of $f$ if  $Df(p)$ is an isomorphism.
})  of the map $f$
an orientation $\epsilon(x)=\pm 1$ with similar  properties to the
sign of the Jacobian determinant in finite dimensions. For this we
choose a fixed regular point $b$ of $f,$ called {\it base point},
and then the corresponding orientation $\epsilon_{b}(x)$ at any
regular point $x$ is uniquely defined by the requirement
$\epsilon_{b}(x)=\sigma(Df\circ\gamma)$, where $\gamma$ is any
path in $\mathcal{O}$ joining $b$ to $x$. Since $\mathcal{O}$ is
simply connected, the independence from the choice of the path
follows from the homotopy invariance of the parity.

Let $\Omega $ be an open bounded set whose closure is contained in
$\mathcal{O}$  such that
$0$ is a regular value of the restriction of $f$ to $\Omega$ and such that $0\not\in f(\partial\Omega).$  Then
the base point degree of $f$ in $\Omega$ is defined by
\begin{equation}\label{deg}
\text{deg}_b(f,\Omega,0)=\sum_{x\in f^{-1}(0)}\epsilon_b(x).
\end{equation}
In the above definition  we use the convention that a sum over the empty set is $0.$

It is proved in \cite{Pej-Rab} that this assignment extends to an
integral-valued degree theory for $C^1$-Fredholm maps defined on
simply connected sets that  are proper on closed bounded
subsets of its domain. The base point degree is invariant under
homotopies only up to sign and, as a matter of fact, since the identity map of a
(separable) Hilbert space can be connected to an isomorphism of
the form the identity map plus a compact map whose Leray-Schauder degree is $-1,$ no degree theory for general Fredholm maps extending the Leray-Schauder
degree can be homotopy invariant.

The main reason for introducing the base point degree is  that  the change in sign along a homotopy can be determined using the parity.

An admissible  homotopy in our  setting is a continuous family of
$C^1$-Fredholm maps   $h\colon [0,1]\times\mathcal{O}\ra Y$
parametrized by $[0,1]$  which is proper on closed bounded subsets
of $[0,1]\times\mathcal{O}.$   As usual, continuous family of
$C^1$-maps means that $h$ is continuous, differentiable in  the
second variable with the derivative continuously depending on
$(t,x).$

Our proof of the main result  will be based on the following  \emph{ homotopy variation property} of the base point degree (see \cite[Lemma 2.3.1]{Pejs-1}):

\begin{lemma}\label{varhomotopy}

Let $h\colon [0,1]\times\mathcal{O}\ra Y$  be an admissible
homotopy, and let  $\Omega$ be  an open bounded subset of X such
that $0\not\in h([0, 1]\times\partial\Omega).$ If  $b_i\in
\mathcal O$  is a  base point for  $h_i =: h(i,-); i= 0,1, $ then
\be\label{homotopy}
\text{\emph{deg}}_{b_0}(h_0,\Omega,0)=\sigma(M)\text{\emph{deg}}_{b_1}
(h_1,\Omega,0),\ee where $M\colon [0,1] \ra \Phi_0(X,Y)$ is the
path $ L\circ \gamma,$ where $L(t,x)=Dh_t(x)$ and  $\gamma$ is any
path joining $(0,b_0)$ to $(1,b_1)$ in $[0,1]\times \mathcal{O}.$
\end{lemma}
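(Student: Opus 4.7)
The plan is to reduce, by a standard density and Sard--Smale argument, to a $C^{2}$-admissible homotopy $h$ that is transversal to $0$ on $[0,1]\times\Omega$ and for which $0$ is a regular value of both $h_{0}$ and $h_{1}$, while keeping $0\notin h([0,1]\times\partial\Omega)$. Both sides of the claimed formula are unchanged under sufficiently small admissible perturbations that preserve the regularity of $b_0,b_1$ and the boundary condition: the base-point degree is a finite sum of local signs stable under small perturbation, and the parity $\sigma(M)$ is a discrete homotopy invariant. Under the resulting transversality, $Z:=h^{-1}(0)\cap([0,1]\times\bar\Omega)$ is a compact $C^{1}$ one-manifold with boundary contained in $\{0,1\}\times\Omega$, hence a finite disjoint union of embedded circles and arcs; each arc has both endpoints in $\{0\}\times\Omega$, both in $\{1\}\times\Omega$, or one on each side.

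The counting step exploits simple connectedness of $[0,1]\times\mathcal{O}$. Given a cross arc $c$ joining $(0,x_{0})$ to $(1,x_{1})$, I would form the closed loop $\Gamma$ obtained by concatenating a path $\alpha_{0}$ in $\{0\}\times\mathcal{O}$ from $(0,b_{0})$ to $(0,x_{0})$, then $c$, then a path $\alpha_{1}$ in $\{1\}\times\mathcal{O}$ from $(1,x_{1})$ to $(1,b_{1})$, then the reverse of $\gamma$. Because $[0,1]\times\mathcal{O}$ is simply connected, $\Gamma$ is null-homotopic, so $L\circ\Gamma$ (with $L(t,x)=Dh_{t}(x)$) is a null-homotopic loop of Fredholm operators at the invertible base-point $Dh_{0}(b_{0})$, whence $\sigma(L\circ\Gamma)=1$. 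Combining the multiplicativity property \eqref{mult} with the definitions $\epsilon_{b_{i}}(x_{i})=\sigma(Dh_{i}\circ\alpha_{i})$ and $\sigma(M)=\sigma(L\circ\gamma)$ gives
\begin{equation*}
\epsilon_{b_{0}}(x_{0})\,\sigma(L\circ c)\,\epsilon_{b_{1}}(x_{1})\,\sigma(M)=1.
\end{equation*}
An entirely analogous loop argument for a same-side arc $c$ from $(0,x_{0})$ to $(0,x_{0}')$ yields $\epsilon_{b_{0}}(x_{0})\,\sigma(L\circ c)\,\epsilon_{b_{0}}(x_{0}')=1$, and similarly for same-side arcs at $t=1$.

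The main obstacle is evaluating $\sigma(L\circ c)$ from the geometry of the arc. Along $c(s)$ the partial derivative $D_{x}h$ is invertible away from isolated turning points (those $s$ where the tangent $\dot c(s)$ has no $t$-component), and at each turning point $D_{x}h$ acquires a transverse one-dimensional kernel; a local reduction via a regular parametrix as in Section $5$ then reduces the parity computation to that of a scalar path whose eigenvalue crosses zero transversally, which contributes $-1$. Since the $t$-coordinate along a same-side arc must attain an odd number of interior critical points (the arc leaves $t=0$ or $t=1$ and returns to it), while on a cross arc this number is even, I would obtain $\sigma(L\circ c)=-1$ for same-side arcs and $\sigma(L\circ c)=+1$ for cross arcs. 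Substituting into the loop identities, the two endpoints of a same-side arc contribute opposite orientations and cancel in $\deg_{b_{0}}(h_{0},\Omega,0)$ and $\deg_{b_{1}}(h_{1},\Omega,0)$, while each cross arc contributes $\epsilon_{b_{0}}(x_{0})=\sigma(M)\epsilon_{b_{1}}(x_{1})$; summing over the arcs of $Z$ delivers the desired identity. The hardest technical point is the parametrix-based justification of the sign flip at each turning point in the infinite-dimensional Fredholm setting, which is precisely the local analysis underpinning the construction of parity in \cite{Fi-Pej-88}.
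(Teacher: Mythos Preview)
The paper does not prove Lemma~\ref{varhomotopy}; it is simply quoted from \cite[Lemma~2.3.1]{Pejs-1}, so there is no in-text argument to compare against. Your outline is the standard cobordism argument behind that result and the degree construction in \cite{Pej-Rab}: perturb to a $C^{2}$ homotopy transversal to $0$, decompose the compact one-manifold $h^{-1}(0)\cap([0,1]\times\bar\Omega)$ into arcs and circles, and use simple connectedness of $[0,1]\times\mathcal{O}$ together with multiplicativity \eqref{mult} of the parity to compare the orientations $\epsilon_{b_i}$ at the arc endpoints. The loop identity you derive is exactly the right mechanism.

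The only step that deserves a little more care is the computation $\sigma(L\circ c)=(-1)^{\#\{\text{turning points}\}}$. Your parity-of-critical-points argument for the $t$-coordinate along $c$ tacitly uses two facts: first, that the arc meets the slices $\{t=0\}$ and $\{t=1\}$ transversally at its endpoints, so that $\tau'(0)$ and $\tau'(1)$ are nonzero with the correct signs (this does follow from $0$ being a regular value of $h_0$ and $h_1$, but it should be said); second, that each turning point is a \emph{simple} crossing of the singular set of $D_xh$, so that the parity flips by exactly $-1$ there. The latter is not automatic from transversality of $h$ to $0$ and requires a further generic perturbation, or alternatively one can bypass the turning-point count entirely by working with a global regular parametrix along $c$ and reading off the sign directly from the finite-dimensional reduction. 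Both routes are routine, and you correctly identify the local analysis of \cite{Fi-Pej-88} as the tool that handles the infinite-dimensional sign flip.
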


\section{Proof of Theorem \ref{theorem}}

Let  $\ff\colon  \Z\times  S^1\times \R^N\ra \R^N$ be a continuous
family of  nonautonomous dynamical systems verifying
$(A0)$--$(A2).$ Take  $X:={\bf c}(\R^N)$ and let  $F\colon
S^1\times X\ra X$ be defined by
\begin{equation}\label{nem}
F(\la,\xx)=(f_n(\la,x_n)),\; \text{for} \;\xx\in X
\;\text{and}\;\la\in S^1.
\end{equation}

Firstly we observe  that $F\colon S^1\times X\ra X$ is well
defined. Indeed, given $\xx\in X$, taking into account Assumption
$(A2),$ we deduce that
\begin{equation*} C_{\la}:=\sup_{(n,s)\in \Z\times [0,1]}\Bigg\|\frac{\partial
f_n}{\partial x}(\la,sx_n)\Bigg\|<\infty, \end{equation*} for all
$\la\in S^1$. Hence using the mean value estimate we get
\begin{equation*}
\|f_n(\la,x_n)\|=\|f_n(\la,x_n)-f_n(\la,0)\|\leq\sup_{s\in
[0,1]}\Bigg\|\frac{\partial f_n}{\partial x}(\la,sx_n)\Bigg\|\cdot
\|x_n\|\leq C_{\la}\|x_n\|.
\end{equation*}
Thus $f_n(\la,x_n)\ra 0$ as $n\ra\pm\infty$, which proves that the
map $F\colon S^1\times X\ra X$ is well defined. Furthermore, the
same argument allows us to define  the family of linear bounded
operators  $T\colon S^1\times X\ra \mathcal{L}(X,X)$ by
\begin{equation}\label{frechet}
T(\la,\xx)\yy:=\Bigg(\frac{\partial f_n(\la,x_n)}{\partial
x}y_n\Bigg), \end{equation}  for $\xx=(x_n), \yy=(y_n)\in X$ and $\la\in S^1$.

\begin{lemma} \label{fredmap}
\indent
\begin{itemize}
\item[i)] If \emph{$\ff$}
verifies $(A1)$ and $(A2),$  then the map  $F\colon S^1 \times
X\ra X$ defined by \eqref{nem} is a continuous family of
$C^1$-maps parametrized by $S^1.$  Moreover
$DF_\la(\emph{\xx})=T(\la,\emph{\xx})$.

\item[ii)] If also  $(A3)$  holds, then there exists a  closed neighborhood $D=\bar B(\emph{\oo},\delta)$ of
\emph{$\oo$} in $X$ such that the restriction of  $G:=S-F$
to $S^1\times D$ is a proper continuous family of $C^1$-Fredholm
maps of index $0$.  Namely,  $G\colon S^1\times D \ra X$ is continuous and proper. Moreover, for any $\la \in S^1,$ the map
$G\colon S^1\times D \ra X$
is  differentiable in the second variable and $DG_\la(\emph{\xx})$ is a Fredholm operator of index $0$ continuously depending on
$(\la,\emph{\xx}).$
\end{itemize}
  \end{lemma}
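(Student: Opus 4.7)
\medskip
\noindent\textbf{Proof plan.} Part (i) amounts to routine Nemytskii-type estimates under (A1)--(A2). My plan is to verify in sequence: (a) $F(\la,\xx)\in X$ by applying (A2) together with the mean value inequality to get $\|f_n(\la,x_n)\|\le C_\la\|x_n\|$, which forces $F(\la,\xx)_n\to 0$; (b) joint continuity of $F$ on $S^1\times X$ by splitting
\[
f_n(\la,x_n)-f_n(\mu,y_n)=[f_n(\la,x_n)-f_n(\mu,x_n)]+[f_n(\mu,x_n)-f_n(\mu,y_n)],
\]
controlling the first bracket uniformly in $n$ by (A1) with $j=0$ and the second via the MVT combined with (A2); (c) differentiability of $F_\la$ with $DF_\la(\xx)=T(\la,\xx)$, using the integral remainder
\[
F(\la,\xx+\hh)-F(\la,\xx)-T(\la,\xx)\hh=\int_0^1 [T(\la,\xx+s\hh)-T(\la,\xx)]\hh\,ds,
\]
whose norm is $o(\|\hh\|)$ by (A1) with $j=1$; and (d) continuity of $(\la,\xx)\mapsto T(\la,\xx)$ in operator norm, again a direct consequence of (A1) with $j=1$.

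For part (ii), $G=S-F$ inherits from $F$ the continuous-family-of-$C^1$-maps property, so $DG_\la(\xx)=S-T(\la,\xx)$. To prove that $DG_\la(\xx)$ is Fredholm of index $0$, I would use the decomposition
\[
DG_\la(\xx)=(S-A_\la)-(T(\la,\xx)-A_\la),\qquad A_\la:=T(\la,\oo),
\]
and show the second summand is compact. It acts on $\yy\in X$ as multiplication by the sequence $c_n:=\partial_x f_n(\la,x_n)-\partial_x f_n(\la,0)$, and since $\xx\in X$ forces $x_n\to 0$, (A1) gives $c_n\to 0$ as $|n|\to\infty$; the finite-rank truncation argument already used for $K_\la$ in the proof of Theorem~\ref{prop:ind} then yields compactness. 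Consequently $DG_\la(\xx)$ is a compact perturbation of $S-A_\la$, which by Theorem~\ref{prop:ind} is Fredholm with $\ind(S-A_\la)=\dim E^s(\la,+\infty)-\dim E^s(\la,-\infty)=0$ thanks to (A3). Continuous dependence of $DG_\la(\xx)$ in operator norm is inherited from that of $T$.

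The genuinely delicate step, and the main obstacle of the lemma, is properness of $G$ on $S^1\times D$ for $\delta$ small. Given $(\la_k,\xx_k)\in S^1\times D$ with $G(\la_k,\xx_k)=\yy_k\to\yy^*$, compactness of $S^1$ yields $\la_k\to\la^*$ along a subsequence, and a diagonal extraction in $\R^N$ gives pointwise convergence $x_{k,n}\to z_n$ for every $n\in\Z$. Upgrading this to convergence in $\cc(\R^N)$ amounts to establishing uniform decay $\|x_{k,n}\|\to 0$ as $|n|\to\infty$, uniformly in $k$. The plan is to read such a decay estimate off the equation
\[
x_{k,n+1}=a_n(\la_k)x_{k,n}+\bigl(f_n(\la_k,x_{k,n})-a_n(\la_k)x_{k,n}\bigr)+y_{k,n}
\]
via the exponential dichotomy of the asymptotically hyperbolic linear system with matrices $a(\la^*,\pm\infty)$ provided by (A3): for $\delta$ small enough the nonlinear Taylor remainder $f_n(\la_k,x_{k,n})-a_n(\la_k)x_{k,n}=o(\|x_{k,n}\|)$ cannot destroy the dichotomy, and a discrete Gronwall estimate then yields $\|x_{k,n}\|\le C\,\theta^{|n|}$ for $|n|$ large, with $\theta<1$ independent of $k$. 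This delivers the required uniform decay and hence convergence of $\xx_k$ in $\cc(\R^N)$. The Fredholm/index computation, by contrast, is essentially a direct corollary of Theorem~\ref{prop:ind} once the compact-perturbation decomposition is in place.
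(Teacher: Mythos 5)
Your part~(i) matches the paper's argument essentially line for line (mean-value estimate for well-definedness, integral remainder bounded via the uniform equicontinuity in $(A1)$ with $j=1$, and norm-continuity of $T$), so there is nothing to add there.

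In part~(ii) you diverge from the paper on both sub-steps, and the divergence is worth recording. For the Fredholm property, the paper computes only $DG_\la(\oo)=L_\la$, invokes Theorem~\ref{prop:ind} to get $\ind L_\la=0$, and then uses the \emph{openness} of $\Phi_0(X,X)$ in $\cL(X,X)$ together with compactness of $S^1$ to find a $\delta$ with $DG_\la(\xx)\in\Phi_0$ on all of $S^1\times B(\oo,\delta)$. You instead show $T(\la,\xx)-T(\la,\oo)$ is a compact multiplication operator (using $x_n\to 0$ and $(A1)$) and conclude Fredholmness of index $0$ of $DG_\la(\xx)$ for \emph{every} $\xx\in X$, not just near $\oo$. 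Both routes are valid; yours proves a slightly stronger statement at roughly the same cost, while the paper's is perfectly adequate for what the lemma needs.

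The real gap is in your treatment of properness, which is where the two arguments genuinely part ways. The paper dispatches it in one sentence by citing the fact that continuous families of $C^1$-Fredholm maps are \emph{locally proper} (Lemma 3.5 of \cite{Be-Fu}), and then shrinks $\delta$ so that $S^1\times\bar B(\oo,\delta)$ lies in a neighborhood on which that local properness applies; no dichotomy estimate is needed at all. Your hand-crafted argument could in principle be made to work, but as written it overreaches: from $G(\la_k,\xx_k)=\yy_k\to\yy^*$ in $\cc(\R^N)$ you only know $y_{k,n}\to 0$ as $|n|\to\infty$ uniformly in $k$ (and not at any exponential rate), so the dichotomy/Gronwall estimate cannot deliver $\|x_{k,n}\|\le C\,\theta^{|n|}$ with $\theta<1$; the bounded solution of $x_{n+1}-a_n x_n=g_n$ decays no faster than the forcing $g_n$. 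What you can extract from the dichotomy is a uniform-in-$k$ decay estimate of the weaker form ``for every $\varepsilon>0$ there is $N$ with $\sup_k\sup_{|n|>N}\|x_{k,n}\|<\varepsilon$,'' which is exactly what is needed to upgrade pointwise to $\cc(\R^N)$-convergence, but you need to state and prove that weaker bound rather than the exponential one. Given that the locally-proper lemma for $C^1$-Fredholm maps is standard and is precisely what this step is for, I would recommend using it as the paper does rather than reproving it by hand in this special case.
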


\begin{proof}

The proof of $i)$  follows the  lines of \cite[Lemma 2.3]{Potz-2}.
We sketch it below  for convenience of the reader,
since our
setting is slightly more general than the one   in
\cite{Potz-2}.
Notice that $(A1)$ tells that the sequence,
for $j=0,1,$
$\frac{\partial^j f_n}{\partial x^j}$
is uniformly
equicontinuous, while $(A2)$ means that the restriction of the
same sequence
to bounded subsets of the domain is equibounded.

 The continuity of  $F$ and the map $T$ defined above follows easily from the equicontinuity assumption $(A1).$

For fixed $\xx\in X$ and $\la\in S^1$  we will show that
$DF_\la(\xx)=T(\la,\xx)$.  To this
end, let \be R(\xx,{\bf h};\la):=\|F(\la,\xx+{\bf
h})-F(\la,\xx)-T(\la,\xx){\bf h}\|,\ee where ${\bf h}\in {\bf
c}(\R^N)$ and $\la\in S^1$. We are to show that $\frac{R(\xx,{\bf
h};\la)}{\|{\bf h}\|}\ra 0$ as $\|{\bf h}\|\ra 0.$ Let
$$
c_n({\bf h};\la):=\sup_{s\in[0,1]}\Biggl\|\frac{\partial
f_n(\la,x_n+sh_n)}{\partial x}-\frac{\partial
f_n(\la,x_n)}{\partial x }\Biggr\|,
$$
for $n\in\Z$. Then Assumptions  $(A2)$ and $(A1)$
imply that \be c_n({\bf h};\la)<\infty \text{ and }\sup_{n\in\Z}
c_n({\bf h};\la)\ra 0 \text{ as } \|{\bf h}\|\ra 0. \ee Then
\begin{align*}
&\Biggl\|f_n(\la,x_n+h_n)-f_n(\la,x_n)-\frac{\partial f_n(\la,x_n)}{\partial x}h_n\Biggr\|=\\
&\Biggl\|\int_0^1
\frac{\partial f_n(\la,x_n+sh_n)}{\partial x}h_n ds-\frac{\partial f_n(\la,x_n)}{\partial x}h_n\Biggr\|\leq\\
&\int_0^1 \Biggl\|\frac{\partial f_n(\la,x_n+sh_n)}{\partial x
}-\frac{\partial f_n(\la,x_n)}{\partial x}\Biggr\|ds\cdot
\|h_n\|\leq\\
&\int_0^1\sup_{n\in\Z}c_n({\bf h};\la) ds\cdot \|h_n\|=
\|h_n\|\cdot\sup_{n\in\Z}
c_n({\bf h};\la)
\leq \|{\bf h}\|\cdot
\sup_{n\in\Z}c_n({\bf h};\la).
\end{align*}
Hence \be 0\leq R(\xx,{\bf h};\la)\leq\|{\bf
h}\|\sup_{n\in\Z}c_n({\bf h};\la), \ee which implies that
$\frac{R(\xx,{\bf h};\la)}{\|{\bf h}\|}\ra 0$ as $\|{\bf h}\|\ra
0$. This completes the proof of $i)$ since we already know that $T$ is continuous.

Let us prove  $ii).$  By the previous considerations  the map
$G(\la,\xx)=S\xx-F(\la,\xx)$ is a continuous family of $C^1$-maps.
Since $a_n(\la)=\frac{\partial f_n}{\partial x}(\la,0),$ it
follows from \eqref{frechet}  that  $DG_\la(\oo)$ is the operator
$L_{\la}\colon X\ra X$ defined by
\begin{equation}\label{linear} L_{\la}\xx=(x_{n+1}-a_n(\la)x_n).
\end{equation} Being $\aa$ asymptotically hyperbolic,  by Theorem
\ref{prop:ind},  the operator   $L_\la$ is Fredholm with  index
given by   \eqref{dim}.  Thus $\ind L_\la=0,$ since by $(A3)$ the
stable subspaces at $\pm \infty$ have the same dimension.

Since $\Phi_0(X,X)$ is  an open subset  of $\mathcal{L}(X,X),$  by
continuity of $DG_\la(\xx)$ and compactness of $S^1,$  there
exists  a $\delta>0$ such that the restriction of $G$ to
$S^1\times B(\oo,\delta)$ is a continuous family of $C^1$-Fredholm
maps of index $0$. Using compactness of $S^1$ again,  we can  find
an eventually  smaller  $\delta $ such that for all $\la \in S^1$
the restriction  of $G$ to $S^1\times \bar B(\oo,\delta)$ becomes
proper, because continuous families of $C^1$-Fredholm maps are
locally proper (\cite[Lemma 3.5]{Be-Fu}).
\end{proof}

Now we can  finalize the proof of Theorem \ref{theorem} using the homotopy variance property of  base point degree.
By  $(A4), \,L_{\la_0}$  is injective, and hence  it
 follows from the Fredholm alternative  that   $L_{\la_0}$ must be  invertible.
 The inverse function  theorem implies that for $\delta>0$ small enough $\oo$ is the only
solution of $G_{\la_0}(\xx)=\oo$ in $B(\oo,\delta)$.   Moreover we
can take $\delta$ so small  that $G\colon S^1\times
\bar{B}(\oo,\delta)\ra X$ verifies $ii) $ of the above Lemma. To
simplify notation we suppose that $\la_0=1\in S^1.$

Assume that for  $\varepsilon<\delta$ there are no homoclinic
solutions $(\la,\xx)$  of  \eqref{main-system}  with
 $\|\xx\|=\varepsilon,$  then $G_\la(\xx) \neq \oo$  on $\partial B(\oo,\varepsilon).$  Consider the  homotopy
$H\colon [0,1]\times \bar B(\oo,\varepsilon)\ra X$ defined by
$H(t,\xx)=G(\text{exp}(2\pi it),\xx).$ Then $H$ is an admissible
homotopy with  $H_0 =G_1=H_1$. Furthermore, we can take $b=\oo$ as
the base point for both $H_0$ and $H_1$. Since $\oo\not\in
H\big([0, 1]\times\partial B(\oo,\varepsilon)\big)$ by Lemma
\ref{varhomotopy}, with  by $\gamma (t)=(t,\oo),$
\begin{equation*}
\text{deg}_{\oo}(H_1,B(\oo,\varepsilon),\oo) =\sigma(M)
\text{deg}_{\oo}(H_0,B(\oo,\varepsilon),\oo),
\end{equation*}
where $M(t) =L_{\text{exp}(2\pi it)}.$ By definition of the degree
for a regular value \eqref{deg} we have \be
\text{deg}_{\oo}(H_j,B(\oo,\varepsilon),\oo)=\text{deg}_{\oo}(G_1,B(\oo,\varepsilon),\oo)
=1,\, j=0,1.\ee

Thus $\sigma(M)=1.$   But $\sigma(M)$ coincides  with the parity of the closed path $L.$ Hence,  by Lemma \ref{parity-lemma}, Theorem \ref{prop:ind} and \eqref{morewhit}
$$ 1=\sigma(L) =w_1(\Ind\,L) = w_1(E^s (+ \infty)) w_1(E^s(-\infty)),$$ which
contradicts our assumption.
\qed

\section{An example}

In this section we are going to illustrate the content of Theorem \ref{theorem}
 comparing our result  with the standard theory.
\vskip5pt

For $\la=\exp(i\theta)$,
$0\leq \theta\leq 2\pi$, we put
\begin{equation*}
a(\la)=a(\exp i\theta ):=\begin{pmatrix}
  1/2+(3/2)\sin^2 \theta/2 & -(3/4)\sin \theta \\
  -(3/4)\sin \theta & 1/2+(3/2)\cos^2 \theta/2
\end{pmatrix}
\end{equation*}
and consider  the  linear nonautonomous system
${\aa} =(a_n (\la)) \colon \Z\times S^1 \ra GL(2)$ defined by
\begin{equation}\label{constant}
a_n(\la) =
\begin{cases} a(\la)& \hif\; n\geq 0,
 \\ a(1)
 &\hif\; n<0.
\end{cases}
\end{equation}

Notice that system $\aa$ has the special  "jump" form \eqref{const}, used in the proof of Theorem \ref{prop:ind}.

 Since independently of  $\la \in S^1$ the matrix $a(\la)$ has   two eigenvalues  $1/2$ and $2,$ the   system $\aa$ is asymptotically  hyperbolic.

We will  apply our results to nonlinear perturbations of $\aa.$ We
compute the asymptotic stable bundles of $\aa$  at
$\pm\infty\!:$
\begin{align*}
E^s(+\infty)&=\{(\la,u)\in S^1\times \R^2\mid
u=t(\cos(\theta/2),\sin(\theta/2)),
\la=\exp(i\theta),t\in\R\},\\
E^s(-\infty)&=\{(\la,u)\in S^1\times \R^2\mid u=(t,0),t\in\R\}.
\end{align*}
Thus $E^s(-\infty)$ is a trivial bundle and hence
$w_1(E^s(-\infty))=1.$ In order to compute  $w_1(E^s(+\infty))$ we
notice that $v_\theta = (\cos(\theta/2),\sin(\theta/2))$ is a
basis for $ E^s_\theta(+\infty)$ which is the fiber of the
pullback $E' $ of $ E^s(+\infty)$ by the map $p\colon [0,2\pi] \ra
S^1$ defined by  $p(\theta) = \exp(i \theta).$

Since $v_0=(1,0)$ and $v_{2\pi}=(-1,0),$ the determinant of the
matrix $C$ arising in \eqref{whitney} is $-1.$ Hence
$w_1(E^s(+\infty))=-1 \neq w_1(E^s(-\infty)).$ Notice that
$E^s(+\infty)$ is an infinite  Moebius band while $E^s(-\infty)$
is an infinite cylinder.

If $\hh$ is any nonlinear perturbation of  $\aa$  verifying
$(A0)$--$(A2)$ and $(A3'),(A4')$ then by Corollary \ref{cor2} the
family $\ff =\aa +\hh$ must have nontrivial homoclinic solutions
bifurcating from the stationary branch at some $\la_* \in S^1.$

On the other hand, let us consider the family  $L$ of  operators
$L_\la$ defined by
\begin{equation*}
L_\la(\xx) (n)=
\begin{cases}
x_{n+1}-a(\la)x_n & \hif\;  n\geq 0, \\
x_{n+1}-a(1)x_n & \hif\; n<0.
\end{cases}
\end{equation*}

By Remark \ref{ker} $\Ker L_\la$ is isomorphic to
$E^s(\la,+\infty)\cap E^u(\la,-\infty).$

But $E^u(-\infty)=\{(\la,u)\in S^1\times \R^2\mid
u=(0,t),t\in\R\}$ and hence a nontrivial intersection  arises only
for $\theta =\pi,$ i.e.,  $ \la =-1.$
Thus  $\Ker L_\la \neq 0$ only if $\la=-1.$

Now, if  $\hh$ verifies, in a neighborhood of  $\la=-1,$ that $\frac {\|h_n(\la,x)\|}{\|x\|} \ra 0$ uniformly in $(n,\la),$ we can use the classical approach based on Lyapunov-Schmidt reduction  in order to obtain  the existence of a branch of homoclinics
bifurcating  from the stationary branch at this point.

Indeed, if the above  condition holds true, the family
 $H \colon S^1\times \cc(\R^2) \to \cc(\R^2),$
induced on function
spaces, verifies $H(\la,\xx)=o(\|\xx\|)$ as $\xx\ra \oo.$ Being
$\Ker L _{-1}$ one dimensional,  we can check the hypothesis
 of  the Crandal-Rabinowitz Bifurcation Theorem in order to find  at $\la =-1$  a bifurcating   branch of homoclinics  \cite{Cr-Ra}.

Instead, by using  Corollary \ref{cor2},  we lost any information about the position of the bifurcation point,  but we  proved  the appearance  of nontrivial homoclinic trajectories for a  rather general class of perturbations.

To some extent,  the use in  bifurcation theory  of elliptic invariants "at large" in place of the Lyapunov-Schmidt  method  parallels the use of the  topological degree instead of the multiplicity of an isolated solution in continuation problems.  This observation  is formulated more precisely in \cite{Pejs-1}.  Let us point out however,  that the relation between the birth of homoclinics  and the topology of asymptotic stable bundles  is interesting by itself and goes beyond the formal aspects of the abstract theory.

\section{Comments}

It should be noted that our results  can
be proved under weaker assumptions. Mainly, it suffices to
assume that the nonautonomous difference system admits an exponential dichotomy \cite{Ba, Potz, Potz-2}. This   becomes useful  in  dealing with difference  equations  in Banach spaces.
On the other hand our  method  can be easily adapted in order to study bifurcation of homoclinics  on manifolds. Following \cite{Ab-Ma2}, to each finite dimensional manifold $M$ and  discrete dynamical system $\ff$ on $M$  having  $x\in M$  as a stationary trajectory   we can associate the Banach manifold $\cc_x(M)$ which is a natural place for the study of  trajectories of the dynamical system $\ff$ homoclinic to $x.$

There are two  interesting problems   which  were not considered here. Namely, global bifurcation, which studies the existence of
connected branches  of solutions  and their  behavior, and the  existence of  large  homoclinic trajectories  using bifurcation from infinity.  These will be treated in a forthcoming paper of the present authors.

\medskip
% The data information below will be filled by AIMS editorial staff
Received xxxx 20xx; revised xxxx 20xx.
\medskip

\end{document}